\theoremstyle{plain}
\newtheorem{theorem}[equation]{Theorem}
\newtheorem{lemma}[equation]{Lemma}
\newtheorem{corollary}[equation]{Corollary}
\newtheorem{proposition}[equation]{Proposition}
\theoremstyle{definition}
\newtheorem{definition}[equation]{Definition}
\newtheorem{remark}[equation]{Remark}
\newtheorem{example}[equation]{Example}
\newtheorem*{intro-prop}{Proposition~\ref{prop:lim}}
\newtheorem*{intro-thm}{Theorem~\ref{thm:main}}
\newcommand{\isom}{\cong}
\newcommand{\Fun}{\operatorname{Fun} }
\newcommand{\un}[1]{\underline{#1}}
\newcommand{\bcat}[1]{\mathbb{#1}}
\newcommand{\DBun}{\mathrm{DBun}}
\newcommand{\pb}[1]{\langle #1 \rangle}
\newcommand{\CRing}{\mathbb{CR}\mathrm{ing}}
\newcommand{\Mod}{\mathrm{Mod}}
\newcommand{\Sch}{\mathbb{S}\mathrm{ch}}
\newcommand{\Alg}{\mathbb{A}\mathrm{lg}}
\newcommand{\Sym}{\mathrm{Sym}}
\newcommand{\Mfld}{\mathbb{M}\mathrm{fld}}
\newcommand{\Z}{\mathbb{Z}}
\begin{document}

\title{A characterization of differential bundles in tangent categories}

\author{Michael Ching}
\email{mching@amherst.edu}
\address{Department of Mathematics and Statistics, Amherst College, Amherst, MA 01002, USA}

\begin{abstract}
A tangent category is a categorical abstraction of the tangent bundle construction for smooth manifolds. In that context, Cockett and Cruttwell develop the notion of differential bundle which, by work of MacAdam, generalizes the notion of smooth vector bundle to the abstract setting. Here we provide a new characterization of differential bundles and show that, up to isomorphism, a differential bundle is determined by its projection map and zero section. We show how these results can be used to quickly identify differential bundles in various tangent categories.
\end{abstract}

\maketitle

\section*{Introduction}

A tangent structure on a category $\bcat{X}$ consists of a functor $T: \bcat{X} \to \bcat{X}$, which plays the role of a `tangent bundle' construction for objects in $\bcat{X}$, along with various natural transformations involving $T$. The notion is originally due to Rosick\'{y}~\cite{rosicky:1984}, but it was modified and the theory greatly extended by Cockett and Cruttwell~\cite{cockett/cruttwell:2014}. We refer the reader there for a precise definition.

Here is the idea. A tangent structure on a category $\bcat{X}$ includes, for each object $M$ of $\bcat{X}$, a morphism
\[ p_M: T(M) \to M \]
which plays the role of the tangent bundle of $M$, and another morphism
\[ 0_M: M \to T(M) \]
which plays the role of the zero section of that bundle. These morphisms are the components of natural transformations $p: T \to 1_{\bcat{X}}$ and $0: 1_{\bcat{X}} \to T$, respectively. The canonical example is $\bcat{X} = \Mfld$, the category of smooth manifolds and smooth maps, in which case $p_M$ and $0_M$ are precisely the projection and zero section of the ordinary smooth tangent bundle. Other parts of the structure of a tangent category describe additional aspects of the basic theory of manifolds such as the addition of tangent vectors in the same fibre.

Cockett and Cruttwell develop in~\cite{cockett/cruttwell:2018} a notion of `differential bundle' in a tangent category, which is intended to capture in this categorical framework the smooth vector bundles in the category of manifolds. A differential bundle consists of four morphisms:
\begin{itemize} \itemsep=5pt
\item $q: E \to M$, which we call the \emph{projection};
\item $z: M \to E$, a section of $q$ which we call the \emph{zero section};
\item $\sigma: E \times_M E \to E$, the \emph{addition}, which plays the role of fibrewise addition of vectors; and
\item $\lambda: E \to TE$, the \emph{vertical lift}, which is less familiar from the theory of vector bundles, but not hard to define.
\end{itemize}
These four morphisms are subject to a number of axioms; see~\cite[2.3]{cockett/cruttwell:2018} for a full list. MacAdam shows in~\cite{macadam:2021} that the Cockett-Cruttwell definition precisely describes smooth vector bundles in the tangent category of smooth manifolds. There is also a notion of \emph{linear map} between differential bundles~\cite[2.3]{cockett/cruttwell:2018} (pairs of morphisms that commute with all the structure maps), and in the tangent category $\Mfld$ these are the usual thing: maps of vector bundles which are linear on each fibre.

Our goal in this note is to provide a new description of differential bundles in an arbitrary tangent category (Corollary~\ref{cor}). That description is inspired by similar results of MacAdam, specifically~\cite[Proposition 6]{macadam:2021}, but our characterization is simpler than his. There are two main parts to our work. Firstly, we have the following result.

\begin{intro-prop}
Let $q: E \to M$ be a morphism in a tangent category $\bcat{X}$, and let $z: M \to E$ be a section of $q$. Then (subject to the existence and preservation by $T$ of certain limits), there is a differential bundle in $\bcat{X}$ whose underlying projection map takes the form
\[ q': M \times_{TM} TE \times_{E} M \to M. \]
\end{intro-prop}

The source object of the map $q'$ is the limit of the following diagram in $\bcat{X}$:
\[\begin{tikzcd}
	M && TE && M \\
	& TM && E
	\arrow["{0_M}"', from=1-1, to=2-2]
	\arrow["{T(q)}"', from=1-3, to=2-2]
	\arrow["{p_E}", from=1-3, to=2-4]
	\arrow["z", from=1-5, to=2-4]
\end{tikzcd}\]
with $q'$ given by the canonical projection from this limit to either of the two copies of $M$ in the diagram. (As part of the proof of this proposition, we show that those two projections are necessarily equal.)

Note that the existence of the differential bundle in Proposition~\ref{prop:lim} does not rely at all on $q$ and $z$ themselves being part of the structure of a differential bundle; any morphism and section can be used. 

Here is our second result, which completes the classification of differential bundles.

\begin{intro-thm}
Let $q: E \to M$, $z: M \to E$, and $\lambda: E \to TE$ be structure maps for a differential bundle in a tangent category $\bcat{X}$. Then there is a linear isomorphism of differential bundles over $M$ given by the map
\[  \pb{q,\lambda,q}: E \stackrel{\isom}{\longrightarrow} M \times_{TM} TE \times_{E} M. \]
Thus the differential bundles in $\bcat{X}$ are, up to isomorphism, precisely those described in Proposition~\ref{prop:lim}.
\end{intro-thm}

One consequence of this theorem is that, up to isomorphism, a differential bundle is determined by its projection map and zero section. The vertical lift then identifies the differential bundle within its isomorphism class. It also follows that the addition morphism $\sigma$ is completely determined by the other parts of the structure; that fact is also established by MacAdam in~\cite[Lemma 5]{macadam:2021}.

Here is a preview of the proofs of our main results. We construct the differential bundle in Proposition~\ref{prop:lim} as the limit of a diagram in the category $\DBun(\bcat{X})_{\mathrm{lin}}$ of differential bundles and linear maps of the form:
\begin{equation} \label{eq:q'} \tag{*} \begin{tikzcd}
	{\un{c}M} && {\un{T}E} && {\un{c}M} \\
	& {\un{T}M} && {\un{c}E}
	\arrow[from=1-1, to=2-2]
	\arrow[from=1-3, to=2-2]
	\arrow[from=1-3, to=2-4]
	\arrow[from=1-5, to=2-4]
\end{tikzcd} \end{equation}
where $\un{T}E$ and $\un{T}M$ are the tangent bundles on $E$ and $M$, respectively, and $\un{c}E$ and $\un{c}M$ are the trivial bundles (whose underlying projection maps and zero section are both identity morphisms). See Lemma~\ref{lem:key} for a precise construction of this diagram.

In order to form the double pullback (*) we need to know how to take limits in the category $\DBun(\bcat{X})_{\mathrm{lin}}$. In Lemma~\ref{lem:lim} we give a formula for those limits and provide conditions under which they exist. Those conditions require that each underlying diagram in $\bcat{X}$ has a limit, and that the limit is preserved by the application of the iterated tangent bundle functors $T^n$ for $n \geq 1$.

In many cases, for example if $T$ is already known to preserve limits, or at least certain kinds of limits, those conditions are automatic. For example, in $\Mfld$, the tangent bundle functor preserves pullbacks along submersions of smooth manifolds, and that fact is sufficient to deduce that the differential bundle $q'$ always exists. In tangent categories arising from models of SDG, or in the standard tangent structures on commutative rings, the functor $T$ preserves all limits, and there is nothing extra to check. In general, however, the extra conditions need to be considered separately. 

The proof of Theorem~\ref{thm:main} amounts to a computation that when $q$ and $z$ are already part of a differential bundle $\un{E}$, then those additional conditions hold and $\un{E}$ itself provides a limit for the diagram (*).

We are grateful to a referee for pointing out the following alternative approach to our results. A morphism $q: E \to M$ in a tangent category $\bcat{X}$ is an object of the slice category $\bcat{X}_{/M}$, and a section $z$ of $q$ is a (generalized) point of that object, i.e.\ a morphism in $\bcat{X}_{/M}$ from the terminal object, which is the identity morphism on $M$, to $q$. Assuming the existence (and preservation by $T$) of sufficient pullbacks, Cockett and Cruttwell show in~\cite[5.7]{cockett/cruttwell:2018} that $\bcat{X}_{/M}$ inherits a tangent structure from that on $\bcat{X}$. The same authors also show in~\cite[4.15]{cockett/cruttwell:2014} that the tangent \emph{space} at a generalized point in any object in a tangent category forms a differential object. Combining these results, we deduce that the tangent space $T_z(q)$, when it exists, is a differential object in the slice tangent category $\bcat{X}_{/M}$. Working through the construction of that tangent category we see that $T_z(q)$ is precisely the projection map
\[ M \times_{TM} TE \times_E M \to M \]
which underlies the differential bundle in Proposition~\ref{prop:lim}. By~\cite[5.12]{cockett/cruttwell:2018} that differential object in $\bcat{X}_{/M}$ is a differential bundle in $\bcat{X}$. Conversely, any differential bundle $q: E \to M$ is a differential object in the slice tangent category, and so is isomorphic to its own tangent space $T_z(q)$. That argument recovers Theorem~\ref{thm:main}, though only by assuming the existence of the slice tangent structure which is not needed in our proof.

One might ask if our perspective also provides a new way to understand linear maps between differential bundles. We can construct linear maps via this method, but unfortunately it does not provide a complete classification, as we show in \S\ref{sec:linear}.

In {\S}\ref{sec:ex} we turn to examples. We use Theorem~\ref{thm:main} to identify the differential bundles in a variety of tangent categories. In almost all of those cases, this recovers previously known results but by a much shorter argument. We also identify differential objects by looking at differential bundles over a terminal object, recovering Cockett and Cruttwell's result, mentioned above that differential objects in a tangent category are, up to isomorphism, precisely the tangent \emph{spaces}, or fibres of the tangent bundle over a point~\cite[4.15]{cockett/cruttwell:2014}.

Finally, while we do not consider here the tangent $\infty$-categories of~\cite{bauer/burke/ching:2023}, we expect that the main results of this paper, and largely also their proofs, will hold equally well in the $\infty$-categorical setting. In fact, the motivation for this paper was the goal of classifying differential bundles in the Goodwillie tangent structure on the $\infty$-category of (differentiable) $\infty$-categories~\cite{bauer/burke/ching:2023}, and we hope to return to that goal in future work.

\subsection*{Notation}
Throughout this paper we follow the notation of Cockett and Cruttwell~\cite{cockett/cruttwell:2018} as closely as possible, with the exception that we denote the zero section of a bundle by $z$ instead of $\zeta$. We write composition of morphisms in a category in diagrammatic order, so that $fg$ denotes $f$ followed by $g$. 

For a differential bundle $\un{E} = (q,z,\sigma,\lambda)$ we write $E_k$ for the wide pullback of $k$ copies of the projection map $q: E \to M$, i.e.\
\[ E_k := E \times_M \dots \times_M E \]
with $k$ factors of $E$. In particular we have $E_1 = E$ and $E_0 = M$.

We often express a morphism of the form $f: A \to B \times_C D$ by writing
\[ f = \pb{f_1,f_2} \]
where $f_1: A \to B$ and $f_2: A \to D$ are the morphisms that uniquely determine $f$. A similar notation is used for maps into other types of limit when they are clearly determined by a sequence of morphisms in the same way. 

\subsection*{Acknowledgements}
This paper is inspired by Ben MacAdam's work~\cite{macadam:2021}, and it came to be thanks also to conversations with Kaya Arro, Geoff Cruttwell, and Florian Schwarz. Particular thanks for JS Lemay for pointing out a mistake in the original version of Example~\ref{ex:CDC}, and to a referee for many suggestions and corrections. I'm very grateful to the organizers of the International Category Theory Conference (CT2024) in Santiago de Compostela, where these results were first presented.

\section{Differential bundles in a tangent category} \label{sec:bun}

We refer the reader to~\cite[2.1, 2.3]{cockett/cruttwell:2018} for full definitions of tangent category, of differential bundle, and of linear map between differential bundles. We will make repeated use of the axioms listed there.

Let $\bcat{X}$ be a tangent category,  and let $q: E \to M$ be a morphism in $\bcat{X}$ with section $z: M \to E$, i.e.\ such that $zq = 1_M$.

Let $\un{c}E := (1_E,1_E,1_E,0_E)$ denote the trivial differential bundle on $E$~\cite[2.4(i)]{cockett/cruttwell:2018}. Let $\un{T}E := (p_E,0_E,+_E,\ell_E)$ denote the tangent bundle on $E$~\cite[2.4(ii)]{cockett/cruttwell:2018}. From the morphisms $q$ and $z$, we construct a diagram of differential bundles in $\bcat{X}$ as follows.

\begin{lemma} \label{lem:key}
There is a diagram of differential bundles (and linear maps) of the form
\[\begin{tikzcd}
	{\un{c}M} && {\un{T}E} && {\un{c}M} \\
	\\
	& {\un{T}M} && {\un{c}E}
	\arrow["{(0_M,1_M)}"', from=1-1, to=3-2]
	\arrow["{(T(q),q)}"{description}, from=1-3, to=3-2]
	\arrow["{(p_E,1_E)}"{description}, from=1-3, to=3-4]
	\arrow["{(z,z)}", from=1-5, to=3-4]
\end{tikzcd}\]
where we have denoted a linear bundle map by the pair consisting of the morphism between total objects followed by the morphism between base objects.
\end{lemma}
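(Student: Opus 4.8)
The plan is to verify, one arrow at a time, that each of the four labelled pairs really is a linear bundle morphism in the sense of~\cite[2.3]{cockett/cruttwell:2018}. To keep this short I would first recall that a pair $(f,g)$ which commutes with the two projection maps is linear as soon as it commutes with the two vertical lifts, the compatibility with the zero sections and fibrewise additions then being automatic (or, if one prefers, easily checked directly). For each arrow this reduces the claim to two equations: a projection equation $f q' = q g$ and a lift equation $\lambda\, T(f) = f\, \lambda'$, where $\lambda,\lambda'$ are the vertical lifts of the source and target bundles respectively. The whole proof is then a matter of matching each of these eight equations to a tangent-category axiom.

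Two of the arrows I would dispatch immediately as images of $q$ and $z$ under a functor. The pair $(T(q),q)\colon \un{T}E \to \un{T}M$ is the tangent functor applied to $q$, so its projection equation $T(q)\, p_M = p_E\, q$ is the naturality of $p$ at $q$ and its lift equation $\ell_E\, T^2(q) = T(q)\, \ell_M$ is the naturality of $\ell$ at $q$. The pair $(z,z)\colon \un{c}M \to \un{c}E$ is $z$ regarded as a map of trivial bundles: its projection equation is the tautology $z = z$, and its lift equation $0_M\, T(z) = z\, 0_E$ is exactly the naturality of the zero section $0$ at $z$.

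For the remaining two arrows the projection equations are trivial and the lift equations are the two basic relations tying $\ell$ to $0$ and $p$. For $(p_E,1_E)\colon \un{T}E \to \un{c}E$ the projection equation is the tautology $p_E\cdot 1_E = p_E\cdot 1_E$ and the lift equation is the tangent-category axiom $\ell_E\, T(p_E) = p_E\, 0_E$; for $(0_M,1_M)\colon \un{c}M \to \un{T}M$ the projection equation is the section identity $0_M\, p_M = 1_M$ and the lift equation is the relation $0_M\, T(0_M) = 0_M\, \ell_M$, which again follows from the axioms governing $0$ and $\ell$. I do not expect any genuine difficulty here: the content is entirely in setting up each commuting square correctly, so the one thing to watch is the bookkeeping of diagrammatic composition order and of which total space sits over which base, so that the source and target lifts are not interchanged. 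A conceptually cleaner packaging, which makes three of the four arrows formal, is to view $\un{c}(-)$ and $\un{T}(-)$ as functors $\bcat{X} \to \DBun(\bcat{X})_{\mathrm{lin}}$ and to recognize $0$ and $p$ as natural transformations $\un{c} \Rightarrow \un{T}$ and $\un{T} \Rightarrow \un{c}$ between them; the four arrows are then $\un{T}(q)$, $\un{c}(z)$, and the components of $0$ and $p$.
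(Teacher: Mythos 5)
Your proposal is correct and follows essentially the same route as the paper: reduce linearity to commutation with projections and vertical lifts (zero sections and addition being automatic, which the paper justifies by citing \cite[2.16]{cockett/cruttwell:2018}), then identify each of the resulting equations with a tangent-category axiom or a naturality square for $p$, $0$, or $\ell$, exactly as you do. Your closing remark about packaging $\un{c}$, $\un{T}$ as functors and $p$, $0$ as natural transformations is a pleasant extra gloss but does not change the substance of the argument.
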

\begin{proof}
We have to show that each of the maps in the diagram is indeed a linear bundle map, i.e. commutes with the projection and vertical lift. (By~\cite[2.16]{cockett/cruttwell:2018}, each map then also commutes with the zero sections and addition.) We take each in turn.

For $(0_M,1_M)$, we have the following commutative diagrams (each of which is a tangent category axiom):
\[\begin{tikzcd}
	M & TM & M && TM \\
	M & M & TM && {T^2M}
	\arrow["{0_M}", from=1-1, to=1-2]
	\arrow["{1_M}"', from=1-1, to=2-1]
	\arrow["{p_M}", from=1-2, to=2-2]
	\arrow["{0_M}", from=1-3, to=1-5]
	\arrow["{0_M}"', from=1-3, to=2-3]
	\arrow["{\ell_M}", from=1-5, to=2-5]
	\arrow["{1_M}"', from=2-1, to=2-2]
	\arrow["{T(0_M)}"', from=2-3, to=2-5]
\end{tikzcd}\]
where we note that the vertical lift for the trivial bundle on $M$ is $0_M: M \to TM$ and on the tangent bundle on $M$ is $\ell_M: TM \to T^2M$.

For $(z,z)$, the relevant diagrams are the following:
\[\begin{tikzcd}
	M & E & M && E \\
	M & E & TM && TE
	\arrow["z", from=1-1, to=1-2]
	\arrow["{1_M}"', from=1-1, to=2-1]
	\arrow["{1_E}", from=1-2, to=2-2]
	\arrow["z", from=1-3, to=1-5]
	\arrow["{0_M}"', from=1-3, to=2-3]
	\arrow["{0_E}", from=1-5, to=2-5]
	\arrow["z"', from=2-1, to=2-2]
	\arrow["{T(z)}"', from=2-3, to=2-5]
\end{tikzcd}\]
where the second is a naturality square for $0$.

For $(p_E,1_E)$, the diagrams are:
\[\begin{tikzcd}
	TE & E & TE && E \\
	E & E & {T^2E} && TE
	\arrow["{p_E}", from=1-1, to=1-2]
	\arrow["{p_E}"', from=1-1, to=2-1]
	\arrow["{1_E}", from=1-2, to=2-2]
	\arrow["{p_E}", from=1-3, to=1-5]
	\arrow["{\ell_E}"', from=1-3, to=2-3]
	\arrow["{0_E}", from=1-5, to=2-5]
	\arrow["{1_E}"', from=2-1, to=2-2]
	\arrow["{T(p_E)}"', from=2-3, to=2-5]
\end{tikzcd}\]
where commutativity of the second diagram is one of the tangent category axioms. 

Finally, for $(T(q),q)$, the diagrams are
\[\begin{tikzcd}
	TE & TM & TE && TM \\
	E & M & {T^2E} && {T^2M}
	\arrow["{T(q)}", from=1-1, to=1-2]
	\arrow["{p_E}"', from=1-1, to=2-1]
	\arrow["{p_M}", from=1-2, to=2-2]
	\arrow["{T(q)}", from=1-3, to=1-5]
	\arrow["{\ell_E}"', from=1-3, to=2-3]
	\arrow["{\ell_M}", from=1-5, to=2-5]
	\arrow["q"', from=2-1, to=2-2]
	\arrow["{T^2(q)}"', from=2-3, to=2-5]
\end{tikzcd}\]
which are naturality squares for $p$ and $\ell$, respectively.
\end{proof}

Our first goal is to find conditions under which the diagram in Lemma~\ref{lem:key} admits a limit. In the following result, we provide sufficient conditions for a limit of an arbitrary diagram of differential bundles (and linear maps) to exist.

\begin{lemma} \label{lem:lim}
Let $\un{E}^\bullet: \bcat{I} \to \mathrm{DBun}(\bcat{X})_{\mathrm{lin}}$ be a diagram in the category of differential bundles in $\bcat{X}$ and linear maps. Equivalently, $\un{E}^\bullet$ is a differential bundle in the functor tangent category $\Fun(\bcat{I},\bcat{X})$ (with the pointwise tangent structure of~\cite[2.2(vii)]{cockett/cruttwell:2018}), i.e. consists of a natural transformation
\[ q^\bullet: E_1^\bullet \to E_0^\bullet \]
along with natural transformations
\[ z^\bullet: E_0^\bullet \to E_1^\bullet, \quad \sigma^\bullet: E_2^\bullet := E_1^\bullet \times_{E_0^\bullet} E_1^\bullet \to E_1^\bullet, \quad \lambda: E_1^\bullet \to TE_1^\bullet \]
which satisfy the Cockett-Cruttwell axioms. In particular, for each $k \geq 1$, the wide pullback $E_k^\bullet$ of $k$ copies of $q^\bullet$ exists, and is preserved by $T^n$ for all $n \geq 1$.

\begin{itemize}
\item[(*)] Suppose that for each $k \geq 0$, the diagram $E_k^\bullet: \bcat{I} \to \bcat{X}$ admits a limit in $\bcat{X}$, which is preserved by $T^n$ for all $n \geq 1$, i.e. the canonical map
\[T^n( \lim E_k^\bullet) \longrightarrow \lim T^n(E_k^\bullet) \]
is an isomorphism in $\bcat{X}$.
\end{itemize}

Then the diagram $\un{E}^\bullet$ has a limit $\un{E}$ in $\mathrm{DBun}(\bcat{X})_{\mathrm{lin}}$ with
\[ E_k = \lim E_k^\bullet \]
and structure maps
\[ q = \lim q^\bullet, \quad z = \lim z^\bullet, \quad \sigma = \lim \sigma^\bullet \]
and $\lambda: E_1 \to TE_1$ equal to the composite
\[\begin{tikzcd}
	{E_1 = \lim E_1^\bullet} && {\lim TE_1^\bullet \isom T(\lim E_1^\bullet) = TE_1}
	\arrow["{\lim \lambda^\bullet}", from=1-1, to=1-3]
\end{tikzcd}\]
\end{lemma}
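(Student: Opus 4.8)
The plan is to regard the proposed data as the image of the differential bundle $\un{E}^\bullet$ in $\Fun(\bcat{I},\bcat{X})$ under the (partially defined) limit functor $\lim \colon \Fun(\bcat{I},\bcat{X}) \to \bcat{X}$, and to check that condition (*) is exactly what is needed for this functor to carry the differential bundle structure across. Concretely, I would first verify that the objects $E_k := \lim E_k^\bullet$ assemble into the correct wide pullbacks. Since each $E_k^\bullet$ is a finite limit (a wide pullback of copies of $q^\bullet$) computed pointwise in $\Fun(\bcat{I},\bcat{X})$, and limits commute with limits, the iterated-limit theorem gives $\lim E_k^\bullet \isom (\lim E_1^\bullet) \times_{\lim E_0^\bullet} \dots \times_{\lim E_0^\bullet} (\lim E_1^\bullet)$, so that $E_k$ really is the $k$-fold wide pullback of $q = \lim q^\bullet$. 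This guarantees that the domains of $\sigma$ and of the various axiom diagrams exist and are what we expect, so that the candidate structure $(q,z,\sigma,\lambda)$ is at least well-defined.

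Next I would check the differential bundle axioms of~\cite{cockett/cruttwell:2018}. Each of these is an equation between two morphisms built out of $q^\bullet, z^\bullet, \sigma^\bullet, \lambda^\bullet$ and the tangent natural transformations $p,0,+,\ell,c$, and each holds for $\un{E}^\bullet$ because $\un{E}^\bullet$ is by hypothesis a differential bundle in the pointwise tangent category $\Fun(\bcat{I},\bcat{X})$. Applying $\lim$ and using its functoriality, every such equation descends to $\bcat{X}$ --- provided one can interpret the terms involving $T$. This is where (*) enters: the comparison maps $T^n(\lim E_k^\bullet) \to \lim T^n(E_k^\bullet)$ are isomorphisms, and these isomorphisms are natural and compatible with $p,0,+,\ell,c$, so they let us translate each diagram involving iterated tangent functors on the limit into the corresponding pointwise diagram. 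In particular this is how $\lambda$, defined as $\lim \lambda^\bullet$ followed by the isomorphism $\lim TE_1^\bullet \isom T(\lim E_1^\bullet)$, is seen to satisfy its axioms.

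The one axiom that is not a mere equation is the universality of the vertical lift, which asserts that a certain canonical map out of $E_2$ (landing in a pullback built from $TE_1$) is an isomorphism. Here I would note that the corresponding universality map $\mu^\bullet$ for $\un{E}^\bullet$ is a pointwise isomorphism; since $\lim$ preserves isomorphisms and, via (*) together with the wide-pullback identification of the first step, carries the source and target of $\mu^\bullet$ to the source and target of the universality map $\mu$ for $\un{E}$, we conclude that $\mu$ is an isomorphism as well. Finally, for the universal property in $\mathrm{DBun}(\bcat{X})_{\mathrm{lin}}$: a cone over $\un{E}^\bullet$ with apex a differential bundle $\un{F}$ restricts on total and base objects to cones over $E_1^\bullet$ and $E_0^\bullet$, hence induces unique maps $F_1 \to E_1$ and $F_0 \to E_0$; that this pair is linear follows (it suffices, by~\cite[2.16]{cockett/cruttwell:2018}, to check compatibility with $q$ and $\lambda$) from the universal properties of $E_0, E_1$ together with the naturality of the comparison isomorphism $T(\lim E_1^\bullet) \isom \lim TE_1^\bullet$ for the $\lambda$ component, and uniqueness is immediate since a linear map is determined by these two components.

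The main obstacle I expect is precisely the bookkeeping around the tangent functor in the second and third steps: every axiom, and the universality map, live in objects obtained by applying iterated $T^n$ to wide pullbacks, so one must track carefully that the comparison isomorphisms supplied by (*) are mutually compatible across all of the structure maps. Once that compatibility is organized --- essentially showing that $\lim$ acts as a locally defined strong morphism of tangent categories on the diagrams in question --- the verification of each individual axiom becomes routine.
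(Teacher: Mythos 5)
Your proposal is correct and follows essentially the same route as the paper: identify the wide pullbacks $E_k$ via commutation of limits, let the equational axioms descend through $\lim$ using the comparison isomorphisms from (*), deduce the universality of the vertical lift from the pointwise axiom because pullbacks commute with limits, and verify the universal property componentwise. The only presentational difference is that the paper makes your ``bookkeeping'' step explicit as a displayed chain of isomorphisms $T^n(E_k) \isom T^n(E_1) \times_{T^n(E_0)} \cdots \times_{T^n(E_0)} T^n(E_1)$, which simultaneously handles existence and $T^n$-preservation of the wide pullbacks of $q$.
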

\begin{proof}
Note that a similar result (stated only for products of differential bundles over a fixed base) appears in~\cite[4.4]{lucyshyn-wright:2018}.

Let $q = \lim q^\bullet: E_1 \to E_0$. We first show that for each $k \geq 0$, the wide pullback of $k$ copies of $q$ exists and is preserved by $T^n$ for all $n \geq 1$. We illustrate the calculation for $k = 2$; the same works for larger $k$. We have a sequence of canonical isomorphisms (for any $n \geq 0$):
\begin{equation} \label{eq:Tnlim} \begin{split} T^n(E_2) &\isom \lim T^n(E_2^\bullet) \\ \\
	&\isom  \lim T^n(E_1^\bullet \times_{E_0^\bullet} E_1^\bullet) \\ \\
	&\isom \lim \left[ T^n(E_1^\bullet) \times_{T^n(E_0^\bullet)} T^n(E_1^\bullet) \right] \\ \\
	&\isom \left( \lim T^n(E_1^\bullet) \right) \times_{\left( \lim T^n(E_0^\bullet) \right)} \left( \lim T^n(E_1^\bullet) \right) \\ \\
	&\isom T^n \left( \lim E_1^\bullet \right) \times_{T^n\left( \lim E_0^\bullet \right)} T^n \left( \lim E_1^\bullet \right) \\ \\
	&= T^n(E_1) \times_{T^n(E_0)} T^n(E_1)
\end{split} \end{equation}
where we have used, respectively, the hypothesis (*), the definition of $E_2^\bullet$ as the pullback of copies of $q^\bullet$, the fact that $T^n$ preserves those pullbacks (since $\un{E}^\bullet$ is a diagram of differential bundles), limits commute with limits, hypothesis (*) again, and the definitions of $E_1$ and $E_0$.

Now let the remainder of the structure maps on $\un{E}$ be given as in the statement of the lemma. To check that these maps make $\un{E}$ into a differential bundle in $\bcat{X}$, we can check the remaining axioms from~\cite[2.3]{cockett/cruttwell:2018}. The majority of those axioms say that some diagram involving the structure maps commutes, and those follow from the commutativity of the corresponding diagrams for the individual differential bundles in the diagram $\un{E}^\bullet$ (together with, for those diagrams that involve the application of $T$, the naturality of the various parts of the tangent structure on $\bcat{X}$).

The remaining axiom is the universality of the vertical lift, which requires that a certain diagram in $\bcat{X}$ is a pullback. That follows from the corresponding axiom for the individual differential bundles in $\un{E}^\bullet$ because pullbacks commute with other limits.

So, $\un{E}$ is a differential bundle. By construction it comes with maps to each of the bundles $\un{E}^i$, which commute both with the maps in the diagram $
\un{E}^\bullet$ and the structure maps, so $\un{E}$ forms a cone over $\un{E}^\bullet$. Suppose we have another cone formed by maps of differential bundles
\[ f^\bullet: \un{X} \to \un{E}^\bullet. \]
In particular, we have underlying cones $f_0^\bullet : X_0 \to E_0^\bullet$ and $f_1:^\bullet  X_1 \to E_1^\bullet$, which determine unique morphisms $f_0: X_0 \to E_0$ and $f_1: X_1 \to E_1$, respectively, which by the universal properties of the limits commute with the structure maps, and so uniquely determine a linear map $f: \un{X} \to \un{E}$. Thus $\un{E}$ is the limit of $\un{E}^\bullet$ in the category $\mathrm{DBun}(\bcat{X})_{\mathrm{lin}}$.
\end{proof}

Applying Lemma~\ref{lem:lim} to the diagram of bundles in Lemma~\ref{lem:key}, we can now obtain our first main result about the existence of differential bundles constructed from a given morphism $q: E \to M$ with section $z: M \to E$.

\begin{proposition} \label{prop:lim}
Let $q: E \to M$ be a morphism in a tangent category $\bcat{X}$ with section $z: M \to E$, such that for all $k \geq 2$, the following double pullback exists
\[ M \times_{T_kM} T_kE \times_{E} M \]
and is preserved by $T^n$ for all $n \geq 1$, where the maps involved in the pullbacks are $(0_k)_M$, $T_k(q)$, $(0_k)_M$, and $z$. (We are writing $p_k: T_k \to I_{\bcat{X}}$ and $0_k: I_{\bcat{X}} \to T_k$ for the iterated projection and zero section maps associated to the tangent structure.)

Then there is a differential bundle with structure maps:
\begin{itemize} \itemsep=5pt
\item $q': M \times_{TM} TE \times_{E} M \to M$ given by projection onto either factor of $M$;
\item $z' = \pb{1_M, z 0_E, 1_M}: M \to M \times_{TM} TE \times_{E} M$;
\item $\sigma'$ given by an isomorphism
\[ (M \times_{TM} TE \times_E M) \times_M (M \times_{TM} TE \times_E M) \isom (M \times_{T_2M} T_2E \times_E M) \]
followed by the map
\[\begin{tikzcd}
	{M \times_{T_2M} T_2E \times_E M} &&& {M \times_{TM} TE  \times_E M;}
	\arrow["{1_M \times_{(+_M)}(+_E) \times_{1_E} 1_M}", from=1-1, to=1-4]
\end{tikzcd}\]
\item $\lambda'$ given by
\[ \begin{tikzcd}
	{M \times_{TM} TE \times_{E} M} &&& {TM \times_{T^2M} T^2E \times_{TE} TM \isom T(M \times_{TM} TE \times _E M).}
	\arrow["{0_M \times_{\ell_M} \ell_E \times_{0_E} 0_M}", from=1-1, to=1-4]
\end{tikzcd} \]
\end{itemize}
\end{proposition}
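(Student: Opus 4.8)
The plan is to exhibit the asserted differential bundle as the limit, in $\DBun(\bcat{X})_{\mathrm{lin}}$, of the diagram of differential bundles $\un{E}^\bullet$ built in Lemma~\ref{lem:key}, and then to read its structure maps off the formulas of Lemma~\ref{lem:lim}. The first step is therefore to assemble the data needed to invoke Lemma~\ref{lem:lim}. The shape $\bcat{I}$ is the double cospan of~\eqref{eq:q'}, with five objects and four arrows. For each $k \geq 0$ I would compute the functor $E_k^\bullet : \bcat{I} \to \bcat{X}$ that takes the $k$-fold wide pullback of the projection of each bundle in the diagram. The trivial bundles $\un{c}M, \un{c}E$ have identity projections, so their $k$-fold pullbacks are $M$ and $E$; the tangent bundles contribute $T_kE$ and $T_kM$. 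Hence $E_k^\bullet$ is
\[\begin{tikzcd}
	M && T_kE && M \\
	& T_kM && E
	\arrow["{(0_k)_M}"', from=1-1, to=2-2]
	\arrow["{T_k(q)}"{description}, from=1-3, to=2-2]
	\arrow["{(p_k)_E}"{description}, from=1-3, to=2-4]
	\arrow["z", from=1-5, to=2-4]
\end{tikzcd}\]
whose limit is precisely the double pullback $M \times_{T_kM} T_kE \times_E M$.

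The second step is to check hypothesis~(*) of Lemma~\ref{lem:lim}: that each $\lim E_k^\bullet$ exists and is preserved by every $T^n$. When $k = 0$ the maps $(0_0)_M$ and $(p_0)_E$ are identities, and the limit collapses through $zq = 1_M$ to $M$; this is an absolute limit and needs no assumption. For $k \geq 1$ the existence of $M \times_{T_kM} T_kE \times_E M$ and its preservation by the $T^n$ are exactly the standing hypothesis of the Proposition, the case $k = 1$ being the total object $M \times_{TM} TE \times_E M$ of $q'$. With~(*) verified, Lemma~\ref{lem:lim} produces a differential bundle $\un{E} = \lim \un{E}^\bullet$ with $E_k = M \times_{T_kM} T_kE \times_E M$; in particular $E_0 \isom M$ and $E_1 = M \times_{TM} TE \times_E M$.

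The final step is to match the structure maps of $\un{E}$ with those in the statement by unwinding Lemma~\ref{lem:lim} componentwise. The projection $q = \lim q^\bullet$ is the limit of $1_M, p_E, 1_M$; since $zq = 1_M$ forces the two legs $E_0 \to M$ to agree, the two projections of $E_1$ onto the copies of $M$ also agree, and their common value is $q'$ — this is the asserted equality of the two projections. The zero section $z = \lim z^\bullet$ assembles the componentwise zero sections $1_M, 0_E, 1_M$ into $\pb{1_M, z0_E, 1_M}$; and $\sigma = \lim \sigma^\bullet$, $\lambda$ are the maps induced fibrewise by $+_M, +_E$ and by the componentwise vertical lifts $0_M, \ell_E, 0_M$, exactly as displayed.

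I expect the main obstacle to be precisely this last identification of $\sigma'$ and $\lambda'$, because both are written using auxiliary isomorphisms that Lemma~\ref{lem:lim} supplies only implicitly: the identification $E_1 \times_M E_1 \isom M \times_{T_2M} T_2E \times_E M$ underlying $\sigma'$ (the $n=0$ case of the comparison chain~\eqref{eq:Tnlim} for $k=2$), and the isomorphism $\lim TE_1^\bullet \isom T(E_1)$ underlying $\lambda'$ (the preservation of $\lim E_1^\bullet$ by $T$, i.e. hypothesis~(*) for $k=1$, $n=1$). The care required is to confirm that the wide-pullback identity $E_2 = E_1 \times_{E_0} E_1$ from Lemma~\ref{lem:lim} agrees with the limit description $\lim E_2^\bullet = M \times_{T_2M} T_2E \times_E M$, so that fibrewise $+_E$ genuinely computes $\sigma'$; once these comparisons are pinned down, the remaining verifications are routine componentwise matchings of cones.
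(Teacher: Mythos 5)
Your proposal is correct and takes essentially the same route as the paper's own proof: apply Lemma~\ref{lem:lim} to the diagram of differential bundles from Lemma~\ref{lem:key}, check hypothesis (*) (with the $k=0$ case disposed of by the retract argument from $zq=1_M$ and the cases $k \geq 1$ supplied by the Proposition's standing hypothesis), and then read the structure maps $q', z', \sigma', \lambda'$ off the limit formulas. If anything, you make explicit some details the paper compresses into ``easily calculated,'' namely that the $k=0$ limit is absolute, that the equality of the two projections $E_1 \to M$ is inherited from the two agreeing legs $E_0 \to M$, and that $\sigma'$ and $\lambda'$ depend on the comparison isomorphisms $E_1 \times_M E_1 \isom \lim E_2^\bullet$ and $\lim TE_1^\bullet \isom T(\lim E_1^\bullet)$.
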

\begin{proof}
The given hypotheses state precisely that the conditions of Lemma~\ref{lem:lim} are satisfied for the diagram of differential bundles in Lemma~\ref{lem:key}. Therefore a limit differential bundle exists, which we denote $\un{V}$. We will show that this bundle takes the form stated.

The object $V_1$ is, by definition, the limit of the diagram
\[\begin{tikzcd}
	M && TE && M \\
	& TM && E
	\arrow["{0_M}"', from=1-1, to=2-2]
	\arrow["{T(q)}"', from=1-3, to=2-2]
	\arrow["{p_E}", from=1-3, to=2-4]
	\arrow["z", from=1-5, to=2-4]
\end{tikzcd}.\]
The object $V_0$ is the limit of the diagram
\[\begin{tikzcd}
	M && E && M \\
	& M && E
	\arrow["{1_M}"', from=1-1, to=2-2]
	\arrow["q"', from=1-3, to=2-2]
	\arrow["{1_E}", from=1-3, to=2-4]
	\arrow["z", from=1-5, to=2-4]
\end{tikzcd}.\]
Because $zq = 1_M$, there is a cone over this diagram with vertex $M$ consisting of the maps $\pb{1_M,z,1_M}$. Suppose the maps $f:X \to M$, $g: X \to E$, $h: X \to M$ form another cone over this diagram. Then
\[ f = gq, \quad g = hz \]
and so
\[ f = hzq = h. \]
It follows that this second cone factors uniquely through the first via the map $h$, and hence that the object $V_0$ is isomorphic to $M$ via either of the two canonical projection maps $V_0 \to M$, which are necessarily equal.

The projection map $q': V_1 \to V_0$ is the map between the limits of the above diagrams induced by the projection maps $p_E$, $p_M$, and the identity morphisms on $E$ and $M$. Combining that map with the isomorphisms we have already described, we identify $q'$ with either of the two projection maps
\[ q': M \times_{TM} TE \times_E M \to M \]
which must be equal. The structure maps $z': V_0 \to V_1$, $\sigma': V_2 \to V_1$ and $\lambda': V_1 \to T(V_1)$ are then easily calculated from the corresponding maps in the diagram of differential bundles in Lemma~\ref{lem:key}.
\end{proof}

Having constructed a collection of differential bundles in $\bcat{X}$, our second goal is to show that any differential bundle is isomorphic to one of those. So now we suppose that $q: E \to M$ and $z: M \to E$ are already the projection and zero section for a differential bundle $\un{E}$ with vertical lift: $\lambda: E \to TE$.

\begin{lemma} \label{lem:cone}
There is a cone with vertex $\un{E}$ over the diagram of differential bundles of Lemma~\ref{lem:key} given by
\[\begin{tikzcd}
	&& {\un{E}} \\
	{\un{c}M} && {\un{T}E} && {\un{c}M} \\
	\\
	& {\un{T}M} && {\un{c}E}
	\arrow["{(q,1_M)}"', from=1-3, to=2-1]
	\arrow["{(\lambda,z)}", from=1-3, to=2-3]
	\arrow["{(q,1_M)}", from=1-3, to=2-5]
	\arrow["{(0_M,1_M)}"', from=2-1, to=4-2]
	\arrow["{(T(q),q)}"', from=2-3, to=4-2]
	\arrow["{(p_E,1_E)}", from=2-3, to=4-4]
	\arrow["{(z,z)}", from=2-5, to=4-4]
\end{tikzcd}\]
\end{lemma}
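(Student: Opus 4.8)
The plan is to verify the two conditions required for the displayed data to constitute a cone in $\DBun(\bcat{X})_{\mathrm{lin}}$: first, that each of the three legs out of $\un{E}$ is a genuine linear bundle map; and second, that the resulting triangles commute, so that the two composites landing in $\un{T}M$ agree and the two composites landing in $\un{c}E$ agree. Since the leg $(q,1_M)$ is used twice, there are really only two maps whose linearity must be checked. As in the proof of Lemma~\ref{lem:key}, by~\cite[2.16]{cockett/cruttwell:2018} it suffices in each case to verify compatibility with the projection and with the vertical lift; compatibility with the zero section and addition then follows automatically.

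For $(q,1_M): \un{E} \to \un{c}M$, I would recall that $\un{c}M$ has projection $1_M$ and vertical lift $0_M$. Compatibility with the projections is the identity $q \cdot 1_M = q \cdot 1_M$, which is automatic, while compatibility with the vertical lifts is the identity $\lambda \cdot T(q) = q \cdot 0_M$, one of the differential bundle axioms of~\cite[2.3]{cockett/cruttwell:2018}. For $(\lambda,z): \un{E} \to \un{T}E$, recall that $\un{T}E$ has projection $p_E$ and vertical lift $\ell_E$. Compatibility with the projections is the identity $\lambda \cdot p_E = q \cdot z$, and compatibility with the vertical lifts is the identity $\lambda \cdot \ell_E = \lambda \cdot T(\lambda)$; both are again among the axioms of~\cite[2.3]{cockett/cruttwell:2018}.

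It then remains to check that the cone commutes, and I would track the total-object and base-object components separately. The two paths into $\un{T}M$ agree precisely when $q \cdot 0_M = \lambda \cdot T(q)$ on total objects and $1_M = z \cdot q$ on base objects; the former is the same lift axiom used above, and the latter is the section condition $zq = 1_M$. Likewise the two paths into $\un{c}E$ agree precisely when $\lambda \cdot p_E = q \cdot z$ on total objects, which we have already recorded, and $z = z$ on base objects, which is trivial. This furnishes the two remaining (undrawn) legs $\un{E} \to \un{T}M$ and $\un{E} \to \un{c}E$ and completes the verification that the data form a cone.

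I do not anticipate any serious obstacle here: every required equation is either the section condition or one of the vertical lift axioms for $\un{E}$. The only point demanding care is bookkeeping --- keeping the diagrammatic order of the composites straight and remembering that the relevant \emph{target} lifts are $0_M$ for the trivial bundle $\un{c}M$ and $\ell_E$ for the tangent bundle $\un{T}E$, rather than the lift $\lambda$ of $\un{E}$ itself. The least transparent of the identities is the vertical lift compatibility $\lambda \cdot \ell_E = \lambda \cdot T(\lambda)$ for $(\lambda,z)$, but since this is a direct axiom it requires no manipulation.
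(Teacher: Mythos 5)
Your proposal is correct and follows essentially the same route as the paper's proof: reduce linearity of $(q,1_M)$ and $(\lambda,z)$ to compatibility with projections and vertical lifts (via~\cite[2.16]{cockett/cruttwell:2018}), identify each required equation as a differential bundle axiom ($\lambda T(q) = q 0_M$, $\lambda p_E = qz$, $\lambda \ell_E = \lambda T(\lambda)$), and then check commutativity of the cone separately on total and base objects, where the only nontrivial input is $zq = 1_M$. The paper presents these same identities as commutative diagrams rather than equations, but the content is identical.
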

\begin{proof}
We first show that $(q,1_M): \un{E} \to \un{c}M$ is a linear map. The diagrams which need to commute are
\[\begin{tikzcd}
	E & M & E && M \\
	M & M & TE && TM
	\arrow["q", from=1-1, to=1-2]
	\arrow["q"', from=1-1, to=2-1]
	\arrow["{1_M}", from=1-2, to=2-2]
	\arrow["q", from=1-3, to=1-5]
	\arrow["\lambda"', from=1-3, to=2-3]
	\arrow["{0_M}", from=1-5, to=2-5]
	\arrow["{1_M}"', from=2-1, to=2-2]
	\arrow["{T(q)}"', from=2-3, to=2-5]
\end{tikzcd}\]
with the second one commuting by one of the differential bundle axioms. Next we show that $(\lambda,z): \un{E} \to \un{T}E$ is a linear map. The relevant diagrams are
\[\begin{tikzcd}
	E & TE & E && TE \\
	M & E & TE && {T^2E}
	\arrow["\lambda", from=1-1, to=1-2]
	\arrow["q"', from=1-1, to=2-1]
	\arrow["{p_E}", from=1-2, to=2-2]
	\arrow["\lambda", from=1-3, to=1-5]
	\arrow["\lambda"', from=1-3, to=2-3]
	\arrow["{\ell_E}", from=1-5, to=2-5]
	\arrow["z"', from=2-1, to=2-2]
	\arrow["{T(\lambda)}"', from=2-3, to=2-5]
\end{tikzcd}\]
which both comute by differential bundle axioms.

Finally, we must show that the diagram of bundles commutes, i.e. the diagram of total objects commutes and the diagram of base objects commutes. For the total objects, the diagram is
\[\begin{tikzcd}
	&& E \\
	M && TE && M \\
	\\
	& TM && E
	\arrow["q"', from=1-3, to=2-1]
	\arrow["\lambda", from=1-3, to=2-3]
	\arrow["q", from=1-3, to=2-5]
	\arrow["{0_M}"', from=2-1, to=4-2]
	\arrow["{T(q)}"', from=2-3, to=4-2]
	\arrow["{p_E}", from=2-3, to=4-4]
	\arrow["z", from=2-5, to=4-4]
\end{tikzcd}\]
and the two squares therein commute by differential bundle axioms. For the base objects, the diagram is
\[\begin{tikzcd}
	&& M \\
	M && E && M \\
	\\
	& M && E
	\arrow["{1_M}"', from=1-3, to=2-1]
	\arrow["z", from=1-3, to=2-3]
	\arrow["{1_M}", from=1-3, to=2-5]
	\arrow["{1_M}"', from=2-1, to=4-2]
	\arrow["q"', from=2-3, to=4-2]
	\arrow["{1_E}", from=2-3, to=4-4]
	\arrow["z", from=2-5, to=4-4]
\end{tikzcd}\]
which commutes because $zq = 1_M$.
\end{proof}

The main result of this paper is that the diagram of bundles in Lemma~\ref{lem:cone} is actually a limit cone for the diagram in Lemma~\ref{lem:key}. That result can be stated as follows.

\begin{theorem} \label{thm:main}
Let $(q,z,\sigma,\lambda)$ be a differential bundle in a tangent category $\bcat{X}$. Then the hypotheses of Proposition~\ref{prop:lim} are satisfied by the morphisms $q$ and $z$, and there is a linear isomorphism of differential bundles over $M$ given by
\[\begin{tikzcd}
	E && {M \times_{TM} TE \times_E M} \\
	& M
	\arrow["{\pb{q,\lambda,q}}", "\isom"',from=1-1, to=1-3]
	\arrow["q"', from=1-1, to=2-2]
	\arrow["{q'}", from=1-3, to=2-2]
\end{tikzcd}\]
Thus, every differential bundle in $\bcat{X}$ is, up to isomorphism, of the form constructed in Proposition~\ref{prop:lim}.
\end{theorem}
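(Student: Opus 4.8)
The plan is to prove that the cone of Lemma~\ref{lem:cone}, with vertex $\un{E}$, is in fact a limit cone for the diagram of Lemma~\ref{lem:key}. Since the structure maps on a limit differential bundle are computed componentwise as in Lemma~\ref{lem:lim}, everything reduces to establishing a universal property at the level of the underlying diagrams $E_k^\bullet : \bcat{I} \to \bcat{X}$ of total objects, together with the $T^n$-preservation demanded in hypothesis (*). Once I show that each $E_k^\bullet$ has limit $E_k$ (the $k$-fold fibre power of $q$), preserved by all $T^n$, two things follow at once: first, this is exactly the content of the hypotheses of Proposition~\ref{prop:lim}, so those are verified; and second, by Lemma~\ref{lem:lim} the limit differential bundle $\un{V}$ has wide pullbacks $E_k$ and structure maps which are forced to coincide with those of $\un{E}$, so that $\un{E}$ \emph{is} the limit. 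The comparison map $\un{E} \to \un{V}$ induced by the cone of Lemma~\ref{lem:cone} reads off, on total objects, the three legs to $\un{c}M$, $\un{T}E$, $\un{c}M$ as $q$, $\lambda$, $q$; that is, it is $\pb{q,\lambda,q}$, a linear isomorphism over $M$, which is the assertion of the theorem.

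First I would reduce to a single computation. For each $k$, the underlying diagram $E_k^\bullet$ of total objects is
\[ M \xrightarrow{(0_k)_M} T_k M \xleftarrow{T_k(q)} T_k E \xrightarrow{(p_k)_E} E \xleftarrow{z} M, \]
whose limit is $M \times_{T_k M} T_k E \times_E M$, and I claim this limit is $E_k$. The case $k = 1$ is the diagram of Proposition~\ref{prop:lim}, with the cone $q,\lambda,q$. The key input is the universality of the vertical lift~\cite[2.3]{cockett/cruttwell:2018}: the square with vertices $E_2, TE, M, TM$ built from $\lambda$ and the fibrewise addition exhibits an isomorphism $E \times_M E \isom M \times_{TM} TE$, and — crucially for hypothesis (*) — that pullback is preserved by every $T^n$. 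Thus the $\big((0_M), T(q)\big)$-pullback appearing in the $k=1$ diagram is already identified with $E_2$ and behaves correctly under $T^n$, and the analogous $k$-fold statement identifies $M \times_{T_k M} T_k E$ with $E_{k+1}$.

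The heart of the argument, and the step I expect to be the main obstacle, is to pass from this identification to the full double pullback $M \times_{TM} TE \times_E M \isom E$. Here I would track the remaining map $p_E: TE \to E$ across the universality isomorphism: using the naturality of $p$ together with the fact that $(p_E, p_M)$ is a morphism of additive bundles from $T\un{E}$ (over $TM$) to $\un{E}$ (over $M$), one computes that $p_E$ corresponds, under $E_2 \isom M \times_{TM} TE$, to the second projection $E_2 \to E$. Pulling back along $z$ then imposes that the second coordinate lie in the image of the zero section; since $zq$ is the fibrewise zero, this cuts the pullback down to the copy $\{(e, zq(e))\} \isom E$. A short check, using the naturality square $z \, 0_E = 0_M \, T(z)$ to see that the added zero term is absorbed, identifies the resulting cone leg into $TE$ with $\lambda$ itself, so that the comparison map is precisely $\pb{q,\lambda,q}$ and is an isomorphism. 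The same bookkeeping applied to the $k$-fold powers gives $M \times_{T_k M} T_k E \times_E M \isom E_k$ and the required $T^n$-preservation; assembling these componentwise identifications through Lemma~\ref{lem:lim} upgrades the isomorphism to a linear isomorphism of differential bundles over $M$.
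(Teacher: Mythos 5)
Your proposal is correct and follows essentially the same route as the paper's proof: both reduce to showing that the underlying diagrams of total objects have limit $E_k$ preserved by $T^n$, with the crucial $k=1$ case handled by pasting the universality-of-the-vertical-lift pullback $E \times_M E \isom M \times_{TM} TE$ (preserved by $T^n$ by the bundle axioms) with the pullback of $\mu p_E = \pi_2$ along $z$, and then checking that the resulting cone leg into $TE$ is $\lambda$ via the same zero-absorption computation, before assembling everything through Lemma~\ref{lem:lim}. The only minor divergence is at $k \geq 2$, where the paper observes that the diagram is an objectwise wide pullback of the $k=1$ diagram over the $k=0$ diagram (so limits commute), whereas you invoke the $k$-fold identification $M \times_{T_k M} T_k E \isom E_{k+1}$ --- a true statement, but one that is not literally an axiom and would need its own short inductive argument.
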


\begin{proof}
The diagram of differential bundles in Lemma~\ref{lem:cone} determines diagrams in $\bcat{X}$ of the form
\begin{equation} \label{eq:diag}
\begin{tikzcd}
	&& {E_k} \\
	M && {T_kE} && M \\
	\\
	& {T_kM} && E
	\arrow["{q_k}"', from=1-3, to=2-1]
	\arrow["{\lambda_k}", from=1-3, to=2-3]
	\arrow["{q_k}", from=1-3, to=2-5]
	\arrow["{(0_k)_M}"', from=2-1, to=4-2]
	\arrow["{T_k(q)}"', from=2-3, to=4-2]
	\arrow["{(p_k)_E}", from=2-3, to=4-4]
	\arrow["z", from=2-5, to=4-4]
\end{tikzcd}
\end{equation}
where $q_k: E_k \to M$ is the wide pullback of $k$ copies of $q$, and $\lambda_k: E_k \to T_kE$ is the map
\[ E_k = E \times_M \dots \times_M E \to TE \times_{E} \dots \times_{E} TE = T_kE \]
induced by $\lambda: E \to TE$ and $z: M \to E$.

We claim that each of these diagrams is a limit. In the case $k = 0$, the diagram takes the form 
\[\begin{tikzcd}
	&& M \\
	M && E && M \\
	\\
	& M && E
	\arrow["{1_M}"', from=1-3, to=2-1]
	\arrow["z", from=1-3, to=2-3]
	\arrow["{1_M}", from=1-3, to=2-5]
	\arrow["{1_M}"', from=2-1, to=4-2]
	\arrow["q"', from=2-3, to=4-2]
	\arrow["{1_E}", from=2-3, to=4-4]
	\arrow["z", from=2-5, to=4-4]
\end{tikzcd}\]
which, we have already noted, is a limit by direct calculation.

Now consider the case $k = 1$, which is the heart of the proof of this theorem. We will reorient the necessary diagram as
\[\begin{tikzcd}
	E && M \\
	& TE & E \\
	M & TM &
	\arrow["q", from=1-1, to=1-3]
	\arrow["\lambda"{description}, from=1-1, to=2-2]
	\arrow["q"', from=1-1, to=3-1]
	\arrow["z", from=1-3, to=2-3]
	\arrow["{p_E}", from=2-2, to=2-3]
	\arrow["{T(q)}"', from=2-2, to=3-2]
	\arrow["{0_M}"', from=3-1, to=3-2]
\end{tikzcd}\]

That diagram factors as follows
\[\begin{tikzcd}
	E &&&& M \\
	\\
	{E \times_M E} && TE && E \\
	\\
	M && TM
	\arrow["q", from=1-1, to=1-5]
	\arrow["{\pb{1_E,qz}}"', from=1-1, to=3-1]
	\arrow["\lambda"{description}, from=1-1, to=3-3]
	\arrow["z", from=1-5, to=3-5]
	\arrow["\mu"', from=3-1, to=3-3]
	\arrow["{q_2}"', from=3-1, to=5-1]
	\arrow["{p_E}", from=3-3, to=3-5]
	\arrow["{T(q)}", from=3-3, to=5-3]
	\arrow["{0_M}"', from=5-1, to=5-3]
\end{tikzcd}\]
where $\mu = \pb{\pi_1\lambda,\pi_2 0_E}T(\sigma)$ is the map appearing in the vertical lift axiom for a differential bundle. The top-left vertical map is well-defined because $1_Eq = qzq$, and this also shows that it factors the map $q: E \to M$ from the previous diagram. The bottom-left square commutes because it is the pullback square in the vertical lift axiom for a differential bundle, and the triangle commutes by~\cite[2.9]{cockett/cruttwell:2018}.

It is now sufficient to show that the diagram above is formed from pullback squares as shown here:
\begin{equation} \label{eq:proof} 
\begin{tikzcd}
	E &&&& M \\
	\\
	{E \times_M E} && TE && E \\
	M && TM
	\arrow["q", from=1-1, to=1-5]
	\arrow["{\pb{1_E,qz}}"', from=1-1, to=3-1]
	\arrow["z", from=1-5, to=3-5]
	\arrow[""{name=0, anchor=center, inner sep=0}, "\mu", from=3-1, to=3-3]
	\arrow["{q_2}"', from=3-1, to=4-1]
	\arrow["\lrcorner"{anchor=center, pos=0.125}, draw=none, from=3-1, to=4-3]
	\arrow["{p_E}", from=3-3, to=3-5]
	\arrow["{T(q)}", from=3-3, to=4-3]
	\arrow["{0_M}"', from=4-1, to=4-3]
	\arrow["\lrcorner"{anchor=center, pos=0.125}, draw=none, from=1-1, to=0]
\end{tikzcd} \end{equation}
The bottom square is precisely the pullback of the vertical lift axiom for a differential bundle. By~\cite[2.9]{cockett/cruttwell:2018}, the middle horizontal composite $\mu p_E$ is equal to projection onto the second factor of $E$, and it is then a direct calculation that the top rectangle is also a pullback. This completes the proof that (\ref{eq:diag}) is a limit diagram for $k = 1$.

The diagram (\ref{eq:diag}) for $k \geq 2$ is equal to the objectwise wide pullback of $k$ copies of the diagram for $k = 1$ over the diagram for $k = 0$. Since wide pullbacks commute with limits, it follows that the diagram for $k \geq 2$ is also a limit.

Next we have to show that the limit diagrams (\ref{eq:diag}) are preserved by $T^n$ for all $n \geq 1$. We start with $k = 0$ for which we have to show that
\[\begin{tikzcd}
	&& {T^n(M)} \\
	{T^n(M)} && {T^n(E)} && {T^n(M)} \\
	& {T^n(M)} && {T^n(E)}
	\arrow["{1_{T^n(M)}}"', from=1-3, to=2-1]
	\arrow["{T^n(z)}", from=1-3, to=2-3]
	\arrow["{1_{T^n(M)}}", from=1-3, to=2-5]
	\arrow["{1_{T^n(M)}}"', from=2-1, to=3-2]
	\arrow["{T^n(q)}"', from=2-3, to=3-2]
	\arrow["{1_{T^n(E)}}", from=2-3, to=3-4]
	\arrow["{T^n(z)}", from=2-5, to=3-4]
\end{tikzcd}\]
is a limit diagram; again, that is a direct calculation.

For $k = 1$ it is sufficient, by the same argument as earlier in this proof, to observe that in the following diagram:
\[\begin{tikzcd}
	{T^n(E)} &&&& {T^n(M)} \\
	\\
	{T^n(E \times_M E)} && {T^n(TE)} && {T^n(E)} \\
	\\
	{T^n(M)} && {T^n(TM)}
	\arrow["{T^n(q)}", from=1-1, to=1-5]
	\arrow["{T^n(\pb{1_E,qz})}"', from=1-1, to=3-1]
	\arrow["{T^n(z)}", from=1-5, to=3-5]
	\arrow["{T^n(\mu)}", from=3-1, to=3-3]
	\arrow["{T^n(q_2)}"', from=3-1, to=5-1]
	\arrow["{T^n(p_E)}", from=3-3, to=3-5]
	\arrow["{T^n(T(q))}", from=3-3, to=5-3]
	\arrow["{T^n(0_M)}"', from=5-1, to=5-3]
\end{tikzcd}\]
the two squares are pullbacks. The bottom square is a pullback because $T^n$ preserves the vertical lift pullback (differential bundle axiom). Since $T^n$ also preserves the pullback $E \times_M E$ (another differential bundle axiom), and using~\cite[2.9]{cockett/cruttwell:2018} again, the top square can be rewritten up to isomorphism as
\[\begin{tikzcd}
	{T^n(E)} && {T^n(M)} \\
	\\
	{T^n(E) \times_{T^n(M)} T^n(E)} && {T^n(E)}
	\arrow["{T^n(q)}", from=1-1, to=1-3]
	\arrow["{\pb{1_{T^n(E)},T^n(q)T^n(z)}}"', from=1-1, to=3-1]
	\arrow["{T^n(z)}", from=1-3, to=3-3]
	\arrow["{\pi_2}"', from=3-1, to=3-3]
\end{tikzcd}\]
which can be directly calculated to be a pullback. That completes the proof that the limit diagram (\ref{eq:diag}) is preserved by $T^n$ when $k = 1$.

The argument for $k \geq 2$ is now essentially that of Lemma~\ref{lem:lim} in reverse. We give it here for $k = 2$; the same works for larger $k$.

Writing $E^\bullet_2$ for the diagram in (\ref{eq:diag}) whose limit is $E_2$, we have
\[ \begin{split} T^n(E_2) &\isom T^n(E_1) \times_{T^n(E_0)} T^n(E_1) \\ \\
	&\isom (\lim T^n(E_1^\bullet)) \times_{(\lim T^n(E_0^\bullet))} (\lim T^n(E_1^\bullet)) \\ \\
	&\isom \lim \left[ T^n(E_1^\bullet) \times_{T^n(E_0^\bullet)} T^n(E_1^\bullet) \right] \\ \\
	&\isom \lim T^n(E_2^\bullet)
\end{split} \]
using: the differential bundle axiom for $E$, what we have already shown in the cases $k = 0,1$, limits commuting with limits, and the differential bundle axioms applied to the consitutent bundles in the diagram $\un{E}^\bullet$. This completes the proof that $T^n$ preserves all the limit diagrams in (\ref{eq:diag}). Formulating each of those in terms of the double pullback, we get the hypotheses of Proposition~\ref{prop:lim}.

The diagram of differential bundles in Lemma~\ref{lem:cone} now induces a linear map of differential bundles
\[ \pb{(q,1_M),(\lambda,z),(q,1_M)}: \un{E} \longrightarrow \lim \left(
\begin{tikzcd}
	{\un{c}M} && {\un{T}E} && {\un{c}M} \\
	\\
	& {\un{T}M} && {\un{c}E}
	\arrow["{(0_M,1_M)}"', from=1-1, to=3-2]
	\arrow["{(T(q),q)}"', from=1-3, to=3-2]
	\arrow["{(p_E,1_E)}", from=1-3, to=3-4]
	\arrow["{(z,z)}", from=1-5, to=3-4]
\end{tikzcd} \right) \]
In the course of verifying the conditions of Proposition~\ref{prop:lim} we have showed that this linear map is an isomorphism on both total objects and base objects. Therefore the map is an isomorphism of differential bundles, which completes the proof of Theorem~\ref{thm:main}.
\end{proof}

We can now state our full characterization of differential bundles in an arbitrary tangent category, based on Proposition~\ref{prop:lim} and Theorem~\ref{thm:main}.

\begin{corollary} \label{cor}
There is a one-to-one correspondence between differential bundles in a tangent category $\bcat{X}$ and triples $(q,z,\lambda)$ of morphisms in $\bcat{X}$ such that:
\begin{enumerate} \itemsep=10pt
\item $zq = 1_M$;
\item the wide pullback of $k$ copies of $q$ over $M$, which we denote $q_k: E_k \to M$, exists for all $k \geq 2$, and that wide pullback is preserved by $T^n$ for all $n \geq 1$;
\item for each $k,n \geq 0$, there is an isomorphism
\[ \pb{T^n(q_k),T^n(\lambda_k),T^n(q_k)}: T^n(E_k) \stackrel{\isom}{\longrightarrow} T^n(M) \times_{T^n(T_kM)} T^n(T_kE) \times_{T^n(E)} T^n(M) \]
where the maps in the double pullback are
\[ T^n((0_k)_M), \; T^n(T_k(q)), \; T^n((p_k)_E), \text{ and }T^n(z), \]
and $\lambda_k: E_k \to T_kE$ is the map on wide pullbacks induced by $k$ copies of $\lambda$ (over $z$).
\end{enumerate}
\end{corollary}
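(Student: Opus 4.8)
The plan is to realize the correspondence through the forgetful assignment $F$ sending a differential bundle $(q,z,\sigma,\lambda)$ to the triple $(q,z,\lambda)$, together with an assignment $G$ in the reverse direction built out of Proposition~\ref{prop:lim}. Given a triple $(q,z,\lambda)$ satisfying (1)--(3), I would first observe that the hypotheses of Proposition~\ref{prop:lim} are all available: condition~(2) supplies the wide pullbacks $E_k$ and their preservation by $T^n$, the $n=0$ instances of~(3) identify each double pullback $M \times_{T_kM} T_kE \times_E M$ with $E_k$ (so it exists), and the $n \geq 1$ instances then show it is preserved by $T^n$. Proposition~\ref{prop:lim} thus produces a differential bundle $\un{V}$ with total object $V_1 = M \times_{TM} TE \times_E M$, and the $n=0$, $k=1$ instance of~(3) is precisely an isomorphism $\phi = \pb{q,\lambda,q} : E \isom V_1$ in $\bcat{X}$. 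I define $G(q,z,\lambda)$ to be the differential bundle on $E$ obtained by transporting $\un{V}$ along $\phi$.

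That $F$ lands among triples satisfying (1)--(3) is already in hand: (1) and (2) are differential bundle axioms, and (3) is exactly the assertion, proved inside Theorem~\ref{thm:main}, that each diagram~(\ref{eq:diag}) is a limit preserved by $T^n$. The composite $G \circ F$ is the identity: for a genuine differential bundle $\un{E}$, Theorem~\ref{thm:main} makes $\phi = \pb{q,\lambda,q}$ a \emph{linear} isomorphism $\un{E} \isom \un{V}$, so transporting $\un{V}$ back along $\phi$ returns $\un{E}$ on the nose.

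The substance lies in the opposite composite $F \circ G = \mathrm{id}$, i.e.\ that the transported bundle $G(q,z,\lambda)$ has projection, zero section, and vertical lift equal to the given $q$, $z$, and $\lambda$. The projection is manifestly $q$. Comparing components into $V_1$ shows $\tilde{z} = z$ is equivalent to $z\lambda = z 0_E$; and, since the $n=1$ instance of~(3) makes $T(\phi) = \pb{T(q),T(\lambda),T(q)}$ an isomorphism, comparing components into $TV_1 \isom TM \times_{T^2M} T^2E \times_{TE} TM$ shows $\tilde{\lambda} = \lambda$ is equivalent to the pair $\lambda T(q) = q 0_M$ and $\lambda \ell_E = \lambda T(\lambda)$. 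Two of these identities are free: the mere well-definedness of the comparison map $\pb{q,\lambda,q}$ into the double pullback forces its components to be compatible, which is exactly $\lambda T(q) = q 0_M$ and $\lambda p_E = q z$.

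The main obstacle is therefore to extract the two remaining vertical-lift coherences, $\lambda \ell_E = \lambda T(\lambda)$ and $z\lambda = z 0_E$, from the full isomorphism strength of condition~(3). The plan is to test equality of maps into $T^2E$ and $TE$ against the monicity of the comparison isomorphisms of~(3) at the next value of $n$, reducing each identity to equalities of composites with $T^2(q)$, $T(q)$, and $T(\lambda)$, and then discharging these using the naturality of $p$, $0$, $\ell$ together with the tangent identities already recorded in Lemma~\ref{lem:key} (for instance $0_M \ell_M = 0_M T(0_M)$). The composites against the projection maps match immediately this way; the delicate point, where I expect the real work to be, is the composite against $T(\lambda)$, since the monicity of $\pb{T(q),T(\lambda),T(q)}$ is blind to precisely the discrepancy being tested, and one must bring in the universality of the vertical lift of $\un{V}$ --- equivalently the $k=2$ instances of~(3), which encode the fibrewise addition --- to close the argument. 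Once these coherences are in place we have $\tilde{z}=z$ and $\tilde{\lambda}=\lambda$, so $F \circ G = \mathrm{id}$, and combined with $G \circ F = \mathrm{id}$ this yields the asserted one-to-one correspondence.
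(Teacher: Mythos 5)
Your overall architecture is the one the paper intends: the paper gives no separate proof of Corollary~\ref{cor}, deriving it from Proposition~\ref{prop:lim} and Theorem~\ref{thm:main} exactly as you do, and your two reductions are correct --- $G \circ F = \mathrm{id}$ follows because Theorem~\ref{thm:main} makes $\pb{q,\lambda,q}$ a \emph{linear} isomorphism, and $F \circ G = \mathrm{id}$ reduces (after the free identities $\lambda T(q) = q 0_M$ and $\lambda p_E = qz$ coming from well-definedness of the comparison maps) to the two identities $z\lambda = z0_E$ and $\lambda \ell_E = \lambda T(\lambda)$. The gap is the step you yourself flag as ``where the real work lies'': you never derive these identities from (1)--(3), and in fact no argument can, because they do not follow. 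Your suspicion that the monicity of $\pb{T(q),T(\lambda),T(q)}$ is blind to precisely the discrepancy being tested is exactly right, and the proposed rescue via the universality of the lift of $\un{V}$ (the $k=2$ instances of (3)) cannot help either.

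Concretely, in $\Mfld$ take $E = M \times \mathbb{R}$ with $q$, $z$, $\lambda$ the standard trivial line bundle structure, let $s: E \to E$ be fibrewise multiplication by $2$, and set $\lambda' = s\lambda$. Then $sq = q$, $sz = z$, and $\lambda'_k = s_k\lambda_k$, so every comparison map in condition (3) satisfies $\pb{T^n(q_k),T^n(\lambda'_k),T^n(q_k)} = T^n(s_k)\,\pb{T^n(q_k),T^n(\lambda_k),T^n(q_k)}$, a composite of isomorphisms; hence $(q,z,\lambda')$ satisfies (1)--(3). But $\lambda'\ell_E \neq \lambda' T(\lambda')$ (in coordinates the two sides are the double vertical lifts of $2r$ and $4r$ respectively), so $\lambda'$ is the vertical lift of no differential bundle; indeed, transporting the bundle of Proposition~\ref{prop:lim} along $\pb{q,\lambda',q}$ returns the \emph{standard} lift $\lambda$, not $\lambda'$. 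So the forgetful map $F$ is injective (as $\sigma$ is determined) but not surjective onto the triples satisfying (1)--(3), and your final step is irreparable under this reading of the statement: conditions (1)--(3) must be supplemented by the identities $z\lambda = z0_E$ and $\lambda\ell_E = \lambda T(\lambda)$, which are exactly the ``pre-differential bundle'' axioms appearing in MacAdam's characterization that this corollary is meant to streamline. Once those are added as hypotheses, your transport argument closes both composites and the proof is complete; without them, what you have proved is only that every triple satisfying (1)--(3) determines \emph{some} differential bundle with projection $q$ and zero section $z$, not one whose lift is the given $\lambda$.
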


\begin{remark} \label{rem:cor2}
If the category $\bcat{X}$ has all pullbacks, and the functor $T$ preserves pullbacks, then (2) and (3) in Corollary~\ref{cor} can be replaced by the single condition that there is an induced isomorphism
\[ \pb{q,\lambda,q}: E \to M \times_{TM} TE \times_E M. \]
\end{remark}

\begin{remark}
Corollary~\ref{cor} should be compared to MacAdam's characterization of differential bundles in~\cite[Proposition 6]{macadam:2021}.

It follows from Theorem~\ref{thm:main} that the isomorphism class of a differential bundle is determined by its projection and zero section. The vertical lift then plays the role of singling out a specific bundle within that isomorphism class. Notice that the addition operation is therefore completely determined by the rest of the structure. (This fact was also observed by MacAdam~\cite[Lemma 5]{macadam:2021}.) To be explicit, we can determine the addition map $\sigma: E \times_M E \to E$ by pulling back the additive stucture on the differential bundle in Proposition~\ref{prop:lim}. That gives
\[ \sigma = \pb{q_2,\lambda_2 (+_E)} i_1^{-1} \]
where $i_1 = \pb{q,\lambda}: E \to M \times_{TM} TE \times_E M$ is the isomorphism of Theorem~\ref{thm:main}.
\end{remark}

\section{Linear Maps} \label{sec:linear}

Now that we have characterized the differential bundles in a tangent category, it makes sense to consider the linear maps between those bundles, i.e.\ those maps of bundles which commute with the vertical lift. Unfortunately, our approach does not appear to give a simple classification of linear maps; let us explain why.

\begin{definition} \label{def:linear}
Let $(g,f): (E,M,q,z) \to (E',M',q',z')$ be a map between bundles which commutes with the projections and sections, i.e. we have commutative diagrams
\[\begin{tikzcd}
	E & {E'} & E & {E'} \\
	M & {M'} & M & {M'}
	\arrow["g", from=1-1, to=1-2]
	\arrow["q"', from=1-1, to=2-1]
	\arrow["{q'}", from=1-2, to=2-2]
	\arrow["g", from=1-3, to=1-4]
	\arrow["f"', from=2-1, to=2-2]
	\arrow["z", from=2-3, to=1-3]
	\arrow["f"', from=2-3, to=2-4]
	\arrow["{z'}"', from=2-4, to=1-4]
\end{tikzcd}\]
Then, assuming the relevant pullbacks exist and are preserved by $T$, the pair $(g,f)$ determines a commutative diagram
\begin{equation} \label{eq:linear} \begin{tikzcd}
	{M \times_{TM} TE \times_E M} &&& {M' \times_{TM'} TE' \times_{E'} M'} \\
	M &&& {M'}
	\arrow["{f \times_{T(f)} T(g) \times_g f}", from=1-1, to=1-4]
	\arrow[from=1-1, to=2-1]
	\arrow[from=1-4, to=2-4]
	\arrow["f"', from=2-1, to=2-4]
\end{tikzcd} \end{equation}
which is a linear map of differential bundles: the top map commutes with the vertical lifts by the naturality of the vertical lift map $\ell$ in the tangent structure (which determines the vertical lifts for these bundles).
\end{definition}

However, the construction of Definition~\ref{def:linear} does not, in general, produce all linear maps between the differential bundles constructed in this way (unless $q$ and $q'$ are already differential bundles to begin with). Here is a counterexample based on the tangent category $\CRing$ of commutative rings described in Example~\ref{ex:cring} below.

\begin{example} \label{ex:fail}
Let $p: R[\epsilon] \to R$ be the ring map given by $\epsilon \mapsto 0$, where $\epsilon^2 = 0$, with zero section $0: R \to R[\epsilon]$ given by $0(r) = r$. (Thus, $p = p_R$ and $0 = 0_R$ are the projection and zero section for the tangent bundle on $R$ in this tangent structure.) Let $q: R[X] \to R$ be the corresponding map for the polynomial ring, i.e. given by $X \mapsto 0$ with section $z: R \to R[X]$ given by $z(r) = r$.

We will see in Example~\ref{ex:cring} that the differential bundles associated to these data take the form of the square-zero extensions
\[\begin{tikzcd}
	{R \oplus R\epsilon} && {R \oplus M\epsilon} \\
	R && R
	\arrow["p"', from=1-1, to=2-1]
	\arrow["{q'}"', from=1-3, to=2-3]
\end{tikzcd}\]
In the first case this is the same as the original $p$ since we started with a differential bundle (the tangent bundle on $R$), but in the second case it is not: $M := \ker(q)$ is the augmentation ideal, i.e. the ideal in $R[X]$ consisting of polynomials with zero constant term.

We also show in Example~\ref{ex:cring} that the linear maps (over $1_R$) from $p$ to $q'$ correspond to the $R$-module homomorphisms $\phi: R \to M$, which in turn correspond to elements $m := \phi(1) \in M$. In particular, there is a linear map $\phi: R \oplus R\epsilon \to R \oplus M\epsilon$ such that $\phi(\epsilon) = X\epsilon$.

On the other hand, $R$-algebra homomorphisms $g: R[\epsilon] \to R[X]$ correspond to elements $g(\epsilon) \in R[X]$ with the property that
\[ g(\epsilon)^2 = g(\epsilon^2) = g(0) = 0. \]
Therefore, there is no such $g$ with the property that $g(\epsilon) = X$, and hence no map of bundles $R[\epsilon] \to R[X]$ which induces the linear map $\phi$ above.
\end{example}

Thus, our perspective on differential bundles does not appear to provide any particular classification of the linear maps between those bundles. In order to calculate the linear maps in the examples in the next section, we will usually work directly from the definition~\cite[2.3]{cockett/cruttwell:2018}: that is, a linear map is a map of bundles which also commutes with the vertical lifts. One benefit our approach does have is that Proposition~\ref{prop:lim} provides an explicit formula for the vertical lift, based on the vertical lift map $\ell$ in the underlying tangent structure.

\begin{remark}
Let $\mathrm{Bun}(\bcat{X})$ be the category in which an object is a morphism $q: E \to M$ with a section $z: M \to E$, and a morphism is a pair $(g,f)$ as in Definition~\ref{def:linear}. Then, subject to the existence (and preservation by $T^n$) of the relevant pullbacks, the construction of Proposition~\ref{prop:lim} determines a functor
\[ D: \mathrm{Bun}(\bcat{X}) \to \DBun(\bcat{X})_{\mathrm{lin}}. \]
Theorem~\ref{thm:main} then provides a natural isomorphism
\[ \un{\lambda}: 1_{\DBun(\bcat{X})_{\mathrm{lin}}} \isom DU \]
where $U: \DBun(\bcat{X})_{\mathrm{lin}} \to \mathrm{Bun}(\bcat{X})$ is the forgetful functor. 

One might wonder if $\un{\lambda}$ is the unit or counit for an adjunction between $D$ and $U$, but in general that is not the case. For example, when $\bcat{X} = \CRing$ with the tangent structure of Example~\ref{ex:cring}, $\mathrm{Bun}(\bcat{X})$ is equivalent to the category of pairs $(R,E)$ consisting of a ring $R$ and an augmented $R$-algebra $E$. As we will see below, $\DBun(\bcat{X})_{\mathrm{lin}}$ is equivalent to the category of pairs $(R,M)$ consisting of a ring $R$ and an $R$-module $M$. With respect to these equivalences, the functor $D$ can be identified with the functor
\[ (R,E) \mapsto (R,\bar{E}) \]
where $\bar{E} = \ker(E \to R)$ is the augmentation ideal. That augmentation ideal functor has a left adjoint given by the symmetric algebra functor
\[ (R,M) \mapsto (R,\Sym_R(M)) \]
and no right adjoint (because it does not preserve colimits). On the other hand, the forgetful functor $U$ corresponds, under these equivalences, to the square-zero extension functor
\[ (R,M) \mapsto (R, R \oplus M \epsilon), \]
which in turn has left adjoint given by forming the module of indecomposables, the quotient of the augmentation ideal by its square.

The categories $\mathrm{Bun}(\bcat{X})$ and $\DBun(\bcat{X})_{\mathrm{lin}}$ are each themselves a tangent category with tangent bundle functor given on the projection and zero section by
\[ T(q,z) = (T(q),T(z)). \]
For $\mathrm{Bun}(\bcat{X})$ this claim follows from~\cite[Ex. 2.2(vii)]{cockett/cruttwell:2018} since $\mathrm{Bun}(\bcat{X})$ is a category of diagrams in $\bcat{X}$. For $\DBun(\bcat{X})_{\mathrm{lin}}$ it is proved in the discussion following~\cite[5.7]{cockett/cruttwell:2018}. It appears that the functors $U$ and $D$ are then each a strong morphism of tangent structure in the sense of~\cite[2.7]{cockett/cruttwell:2014}. For $U$ that claim is immediate from the definition of the tangent structure on $\DBun(\bcat{X})_{\mathrm{lin}}$. For $D$ we have to provide a natural isomorphism of differential bundles (over $TM$):
\[ T(M \times_{TM} TE \times_E M) \isom TM \times_{T^2M} T^2E \times_{TE} TM \]
which comes from the assumption (made in the construction of $D$) that $T$ preserves this double pullback.
\end{remark}

\section{Examples} \label{sec:ex}

We conclude the paper with some examples of how our characterization can be used to understand differential bundles in various tangent categories, as well as how to determine particular kinds of differential bundles, such as differential objects.

\begin{example}[Manifolds] \label{ex:mfld}
We can use Theorem~\ref{thm:main} to recover MacAdam's result~\cite[Proposition 12]{macadam:2021} that differential bundles in $\Mfld$ are smooth vector bundles. Note that we follow the definition of smooth manifold in which different connected components of a manifold are required to have the same dimension. In~\cite{macadam:2021} a more general definition is used without that restriction. In that more general case differential bundles are vector bundles which are permitted to have different fibre dimensions over different connected components.

Let $q: E \to M$ be a differential bundle in the tangent category $\Mfld$ of smooth manifolds and smooth maps. Then, by Theorem~\ref{thm:main}, $q$ is diffeomorphic to
\[ q': M \times_{TM} TE \times_E M \to M. \]
The tangent bundle $p_E: TE \to E$ is a smooth vector bundle, and since smooth vector bundles are stable under pullback~\cite[{\S}3]{milnor/stasheff:1974}, it follows that
\[ TE \times_E M \to M \]
is a smooth vector bundle.

We now claim that the map
\[ f: TE \times_E M \to TM; \quad (v,m) \mapsto T(q)(v) \]
is a linear map between smooth vector bundles over $M$, i.e.\ that the map induced by $f$ on each fibre is linear. On the fibre over $m \in M$, that map is given by the derivative of $q$:
\[ f_m =  dq_{z(m)}: T_{z(m)}E \to T_mM \]
which is linear as required.

Moreover, each linear map $f_m$ is surjective because it has a section given by $dz_m$, so $f$ is a map of smooth vector bundles of constant rank (equal to $\dim(E) - \dim(M)$). Therefore, by~\cite[10.34]{lee:2013}, the kernel of $f$ is a smooth vector bundle over $M$. That kernel consists of those tangent vectors to $z(m)$ which project to $0$ in $T_mM$, which is precisely the pullback
\[ q': M \times_{TM} TE \times_E M \to M. \]
Therefore $q'$ is a smooth vector bundle over $M$, and hence so too is the original differential bundle $q: E \to M$. This verifies that differential bundles in $\Mfld$ are smooth vector bundles. Along with the (easier) proof that smooth vector bundles are differential bundles in $\Mfld$~\cite[Proposition 11]{macadam:2021}, we recover MacAdam's result that these two notions coincide.
\end{example}

\begin{example}[Trivial tangent categories] \label{ex:triv}
Let $\bcat{X}$ be a category with the trivial tangent structure in which $T$ is the identity functor, and all tangent bundle structure maps are identity morphisms. Let $q: E \to M$ be a morphism with section $z: M \to E$. The corresponding differential bundle is given by the double pullback
\[ M \times_M E \times_E M \to M \]
which is an isomorphism. Thus every differential bundle in $\bcat{X}$ is trivial, i.e.\ isomorphic to the bundle $\un{c}M$ for some $M$.
\end{example}

\begin{example}[Trivial differential bundles] \label{ex:triv-bdle}
Let $M$ be any object in a tangent category, and let $q, z$ both be the identity morphism on $M$. Proposition~\ref{prop:lim} determines a differential bundle of the form
\[ M \times_{TM} TM \times_M M \to M \]
which is isomorphic to the trivial differential bundle over $M$. This fact also follows directly from Theorem~\ref{thm:main} applied to the trivial bundle $\un{c}M$.
\end{example}

\begin{example}[Differential objects] \label{ex:diffobj}
Let $\bcat{X}$ be a \emph{cartesian} tangent category, i.e.\ such that $\bcat{X}$ has finite products which are preserved by $T$. Then Cockett and Cruttwell~\cite[4.3]{cockett/cruttwell:2014} define the notion of \emph{differential object} in $\bcat{X}$, and they show in~\cite[3.4]{cockett/cruttwell:2018} that differential objects correspond precisely to differential bundles over the terminal object $1 \in \bcat{X}$. We can therefore use Theorem~\ref{thm:main} to give the following alternative description of differential objects.

Let $q:E \to 1$ be the unique map from $E$ to the terminal object, and let $z: 1 \to E$ be any morphism, i.e.\ a generalized point of $E$ in $\bcat{X}$. Then, according to Proposition~\ref{prop:lim}, there is a differential bundle in $\bcat{X}$ of the form
\[ 1 \times_{T(1)} T(E) \times_{E} 1 \to 1 \]
as long as the double pullback exists and is preserved by $T^n$. (In this case, the wide pullbacks of the projection are simply finite products in $\bcat{X}$, which we have already assumed to exist and be preserved by $T^n$.)

Since $T(1) \isom 1$ in a cartesian tangent category, the double pullback above is precisely the \emph{tangent space} $T_zE$ to the object $E$ at the point $e$. Therefore, our theory recovers the fact~\cite[4.15]{cockett/cruttwell:2014} that every tangent space in a tangent category has a canonical differential structure. In fact, if one follows through Cockett and Cruttwell's proof of that fact, we see that it essentially agrees with the proof we have given here for differential bundles in general.

Theorem~\ref{thm:main} thus gives us the following characterization of differential objects in a cartesian tangent category: a differential object consists of an object $E$ and maps $z,\lambda$ for which the following is a pullback that is preserved by $T^n$ for all $n \geq 1$:
\[\begin{tikzcd}
	E & TE \\
	1 & E
	\arrow["\lambda", from=1-1, to=1-2]
	\arrow[from=1-1, to=2-1]
	\arrow["{p_E}", from=1-2, to=2-2]
	\arrow["z"', from=2-1, to=2-2]
\end{tikzcd}\]
Of course, that characterization amounts simply to saying $E$ is isomorphic to one of its tangent spaces. 
\end{example}

\begin{example}[Cartesian differential categories] \label{ex:CDC}
One source of tangent categories, described in~\cite[4.7]{cockett/cruttwell:2014}, is the theory of cartesian differential categories (CDCs) of Blute, Cockett, and Seely~\cite{blute/cockett/seely:2009}. These examples can essentially be viewed as cartesian tangent categories $\bcat{X}$ in which every object is equipped with a chosen differential structure. In particular, the tangent bundle functor $T: \bcat{X} \to \bcat{X}$ is given by
\[ T(A) = A \times A. \]
Note that $T$ preserves all pullbacks in $\bcat{X}$. We can now identify differential bundles in $\bcat{X}$ as follows. (We are grateful to JS Lemay for helping to clarify some of the following arguments.)

Let $q: E \to M$ be a morphism with section $z: M \to E$. We can rewrite the double pullback which determines the corresponding differential bundle in the form
\[ (M \times 1) \times_{(M \times M)} (E \times E) \times_{(E \times 1)} (M \times 1) \]
where we use the fact that $0_M: M \to M \times M$ is given by $\pb{1_M,*}$, and $p_E: E \times E \to E$ is the second projection. (Recall that in a CDC each hom-set has the structure of a commutative monoid. Here $*$ denotes the zero map $M \to M$, which factors via the terminal object $1$.)

Since a CDC has finite products, we can rewrite the double pullback above as a single pullback
\begin{equation} \label{eq:CDC} (M \times 1) \times_{(M \times M)} (M \times E). \end{equation}
In general, we cannot say more; this pullback need not exist in $\bcat{X}$. However, if it does then its projection to $M \isom (M \times 1)$ is a differential bundle, and every differential bundle is of this form. 

Suppose, however, that the map $q: E \to M$ has a kernel, i.e. there is a pullback square in $\bcat{X}$ of the form
\[\begin{tikzcd}
	{E_0} & E \\
	1 & M
	\arrow["i", from=1-1, to=1-2]
	\arrow[from=1-1, to=2-1]
	\arrow["q", from=1-2, to=2-2]
	\arrow["0"', from=2-1, to=2-2]
\end{tikzcd}\]
where $0: 1 \to M$ is the zero map.

In that case, the pullback (\ref{eq:CDC}) is given by $M \times E_0$, and the differential bundle corresponding to the original pair $(q,z)$ is the projection map
\[ M \times E_0 \to M. \]
Conversely, any projection map $q: M \times F \to M$ is a differential bundle with zero section
\[ z := \pb{1_M,0}: M \to M \times F \]
and vertical lift
\[ \lambda = \pb{1_M \times 0, 0 \times 1_F} : (M \times F) \to (M \times F) \times (M \times F). \]
Thus the differential bundles in the canonical tangent structure on a CDC include the product projection maps, which can be thought of as the trivial differential bundles with arbitrary fibre $F$, though there can be additional differential bundles of the form (\ref{eq:CDC}).
\end{example}

\begin{example}[Synthetic differential geometry (SDG)] \label{ex:sdg}
By~\cite[5.4]{cockett/cruttwell:2014} the microlinear objects in a model of SDG form a cartesian tangent category $\bcat{X}$ in which the tangent bundle functor is given by exponential objects
\[ T(X) = X^D \]
associated to the object $D$ of infinitesimals. It is shown in~\cite[Ex. 2.4(iv)]{cockett/cruttwell:2018} that differential bundles in this tangent category are the `Euclidean vector bundles' over $M$; see~\cite[3.1.2, Def. 2]{lavendhomme:1996}. We can use Corollary~\ref{cor} to recover this result.

The tangent category $\bcat{X}$ admits all pullbacks, and the functor $T$ is a right adjoint so preserves those pullbacks. By Remark~\ref{rem:cor2} a differential bundle in $\bcat{X}$ therefore consists of a triple $(q,z,\lambda)$ such that $q: E \to M$ has section $z: M \to E$, and $\lambda: E \to E^D$ determines an isomorphism
\[ \pb{q,\lambda,q}: E \isom M \times_{M^D} E^D \times_E M. \]
Transpose to $\lambda$ is a map $D \times E \to E$, which we write as an action of $D$ on $E$ by $(d,e) \mapsto d \cdot e$.

We can then express the necessary conditions on $q,z,\lambda$ as follows using the internal language for the underlying topos:
\begin{itemize} \itemsep=10pt
\item $q(z(m)) = m$ for all $m \in M$;
\item $0 \cdot e = z(q(e))$ for all $e \in E$;
\item $q(d \cdot e) = q(e)$ for all $d \in D$ and all $e \in E$;
\item for any morphism $f: D \to E$ such that $q(f(d)) = q(f(0))$ for all $d \in D$, and $f(0) = z(q(f(0))$, there is a unique $e \in E$ such that 
\[ f(d) = d \cdot e \quad \text{for all $d \in D$}. \]
\end{itemize}
These conditions are a subset of the set of requirements for $q$ to be a Euclidean vector bundle in the sense of~\cite[3.1.2, Def. 2]{lavendhomme:1996}, i.e. that $q$ has the structure of a Euclidean $R$-module (see~\cite[1.1.4, Def. 1]{lavendhomme:1996}) on each fibre $E_m := q^{-1}(m)$, where $R$ is the line object in $\bcat{X}$. Therefore every Euclidean vector bundle is a differential bundle.

Conversely, let $q: E \to M$ be the projection map of a differential bundle in $\bcat{X}$. Then, by Theorem~\ref{thm:main}, $q$ is isomorphic to the projection
\[ M \times_{TM} TE \times_E M \to M. \]
To see that $q$ is a Euclidean vector bundle, we have to show that each fibre is a Euclidean $R$-module. We can identify the fibre of this projection over $m$ as the kernel of the derivative map
\[ dq_{z(m)}: T_{z(m)}E \to T_mM \]
By~\cite[3.1.2, Prop. 2,3]{lavendhomme:1996}, that is a linear map between Euclidean $R$-modules, so it is sufficient to show that the kernel of such a map is also a Euclidean $R$-module.

So let $\alpha: N \to N'$ be an $R$-linear map between Euclidean $R$-modules with kernel $K$, and let $f: D \to K$ be any morphism. Since $N$ is Euclidean, there is a unique $b \in N$ such that
\[ f(d) = f(0) + d \cdot b \]
for all $d \in D$. We have
\[ d \cdot \alpha(b) = \alpha(d \cdot b) = \alpha(f(0) + d \cdot b) = \alpha(f(d)) = 0. \]
Since $N'$ is Euclidean, it follows that $\alpha(b) = 0$, and so $b$ is in $K$. Thus $K$ is Euclidean as required.

Therefore, differential bundles in $\bcat{X}$ are precisely the Euclidean vector bundles.
\end{example}

\begin{example}[Commutative rings] \label{ex:cring}
Let $\CRing$ be the tangent category of commutative rings with identity, with tangent bundle functor given by
\[ T(R) = R[\epsilon] \]
where $\epsilon^2 = 0$. See~\cite[3.1]{cruttwell/lemay:2023} for a complete description of this tangent structure. We recover the description of differential bundles in $\CRing$ due to Cruttwell and Lemay~\cite[3.13]{cruttwell/lemay:2023} as follows.

Let $q: E \to R$ be a ring homomorphism with section $z: R \to E$. In other words, $E$ is an augmented $R$-algebra. The category $\CRing$ has all limits, and $T$ is a right adjoint, so preserves those limits. Therefore the conditions of Proposition~\ref{prop:lim} are automatically satisfied, and so there is a differential bundle over $R$ of the form
\[ V = R \times_{R[\epsilon]} E[\epsilon] \times_E R. \]
That pullback is the ring with elements
\[ (r,z(r)+b\epsilon,r) \]
where $b \in \ker(q)$, with ring structure determined by those in $R$ and $E$ with $\epsilon^2 = 0$. In other words, we have
\[ V \isom R \oplus M\epsilon, \]
the square-zero extension of the commutative ring $R$ by the $R$-module $M := \ker(q)$. Thus every differential bundle is, up to isomorphism, the projection map
\[ R \oplus M\epsilon \to R \]
for some such square-zero extension.

Conversely, given an arbitrary $R$-module $M$, apply the previous construction to the projection homomorphism $q: R \oplus M\epsilon \to R$. We can identify $\ker(q) \isom M$ as $R$-modules, and so $q$ is a differential bundle. Thus the differential bundles are, up to isomorphism, precisely the square-zero extensions of $R$ by an $R$-module $M$.

Note that the vertical lift of this square-zero extension is the map
\[ \lambda: (R \oplus M[\epsilon]) \to (R \oplus M[\epsilon])[\epsilon_1],\]
where $\epsilon_1^2 = 0$ also, given by
\[ \lambda(r+m\epsilon) = (r+0\epsilon)+(0+m\epsilon)\epsilon_1. \]
A ring homomorphism $f: R \oplus M\epsilon \to R \oplus M'\epsilon$ over $R$ commutes with the vertical lifts if and only if it is induced by an $R$-module homomorphism $M \to M'$, and so we have identified an equivalence of categories
\[ \DBun_R(\CRing)_{\mathrm{lin}} \simeq \Mod_R \]
between the category of differential bundles over $R$ in $\CRing$ and the category of $R$-modules.
\end{example}

\begin{example}[Commutative rings (opposite)] \label{ex:cring-op}
There is a tangent structure on the opposite of the category of commutative rings, in which the tangent bundle functor $S: \CRing^{op} \to \CRing^{op}$ is the (opposite of) the left adjoint of the tangent bundle functor $T: \CRing \to \CRing$ of Example~\ref{ex:cring}. Explicitly, the functor $S$ is given by
\[ S(R) = \mathrm{Sym}_R(\Omega_R), \]
the free commutative $R$-algebra on the $R$-module $\Omega_R$ of K\"{a}hler differentials of $R$ over $\Z$. A presentation of $S(R)$ as a commutative $R$-algebra is given as follows: we have algebra generators $d[r]$ for each $r \in R$, together with relations
\[ d[0] = 0, \quad d[1] = 0, \quad d[r+s] = d[r] + d[s], \quad d[rs] = r d[s]+s d[r]. \]
A full description of this tangent structure on $\CRing^{op}$ is given in~\cite[4.2]{cruttwell/lemay:2023}.

We now apply Theorem~\ref{thm:main} to recover Cruttwell and Lemay's description~\cite[4.17]{cruttwell/lemay:2023} of the differential bundles in the tangent category $\CRing^{op}$.

Let $q^{op}: E \to R$ be a morphism in $\CRing^{op}$ with section $z^{op}: R \to E$. In other words, we have ring homomorphisms $q: R \to E$ and $z: E \to R$ such that $qz = 1_R$; again, these maps make $E$ into an augmented commutative $R$-algebra, though note that the roles of $q$ and $z$ are reversed. Since $\CRing^{op}$ has limits and $S: \CRing^{op} \to \CRing^{op}$ is a right adjoint (its opposite $S^{op}$ is left adjoint to $T$), we can apply Proposition~\ref{prop:lim} to obtain a differential bundle in $\CRing^{op}$ whose projection map, viewed in $\CRing$ takes the form
\[ q': R \to R \otimes_{S(R)} S(E) \otimes_E R \]
where $\otimes$ denotes the coproduct (and pushout) in $\CRing$, which is given by the (relative) tensor product of commutative rings. In other words, the target of $q'$ is the commutative ring
\[ R \otimes_{\Sym_R(\Omega_R)} \Sym_E(\Omega_E) \otimes_{E} R. \]
This ring is generated as an $R$-algebra by the symbols $d[e]$ for $e \in E$, with relations
\[ d[e+e'] = d[e]+d[e'], \quad d[ee'] = z(e) d[e'] + z(e') d[e], \quad d[q(r)] = 0.\]
Notice that the other relations $d[0] = 0$ and $d[1] = 0$ in $\Omega_E$ are subsumed by the last one. Since all of these relations are homogenous in the number of symbols $d[-]$, the corresponding ring is a free commutative $R$-algebra on the $R$-module with those same relations and generators. That $R$-module is precisely the extension of scalars (along $z: E \to R$) of the $E$-module of \emph{relative} K\"{a}hler differentials $\Omega_{E/R}$. We therefore have
\[ R \otimes_{\Sym_R(\Omega_R)} \Sym_E(\Omega_E) \otimes_{E} R \isom \Sym_R(R \otimes_E \Omega_{E/R}). \]
The differential bundle in $\CRing^{op}$ constructed from $q^{op}$ and $z^{op}$ therefore takes the form
\[ q': R \to \Sym_R(R \otimes_E \Omega_{E/R}). \]
In particular, this is the inclusion of scalars in a \emph{free} commutative $R$-algebra on an $R$-module $M = R \otimes_E \Omega_{E/R}$.

Now take $E = R \oplus M\epsilon$ for an arbitrary $R$-module $M$, with $\epsilon^2 = 0$. There is an isomorphism of $R$-modules
\[ R \otimes_{(R \oplus M\epsilon)} \Omega_{(R \oplus M\epsilon)/R} \isom M; \quad r \otimes d[m\epsilon] \mapsto r \cdot m, \]
and so we deduce that there is a differential bundle in $\CRing^{op}$ with projection map the ring homomorphism
\[ R \to \Sym_R(M). \]
Hence a differential bundle in $\CRing^{op}$ is, up to isomorphism, the inclusion of scalars
\[ q': R \to \Sym_R(M) \]
for an $R$-module $M$.

Now let's determine the linear maps between these differential bundles. Those are the $R$-algebra homomorphisms
\[ \phi: \Sym_R(M) \to \Sym_R(M') \]
which commute with the vertical lifts on each side. The vertical lift for the bundle
\[ q': R \to R \otimes_{S(R)} S(E) \otimes_E R \]
is induced by the vertical lift for the tangent structure: that is the morphism in $\bcat{X}^{op}$ corresponding to
\[ \ell_E: S^2(R) \to S(R) \]
given by (according to~\cite[4.2(vii)]{cruttwell/lemay:2023}):
\[ \ell_E(e) = e, \quad \ell_E(d[e]) = 0, \quad \ell_E(d'[e]) = 0, \quad \ell_E(d'[d[e]]) = e. \]
Translating into the vertical lift for the differential bundle
\[ q': R \to \Sym_R(M) \]
we get
\[ \lambda: S(\Sym_R(M)) \to \Sym_R(M) \]
given on $R$-algebra generators by
\[ \lambda(m) = 0, \quad \lambda(d[r]) = 0, \quad \lambda(d[m]) = m \]
for $m \in M$ and $r \in R$; see also~\cite[4.10]{cruttwell/lemay:2023}. In particular, we have, for $r \in R$, $m_1,\dots,m_k \in M$:
\[ \lambda(d[r \cdot m_1 \cdots m_k]) = \sum_{i = 1}^{k} r \lambda(m_1) \cdots \lambda(d[m_j]) \cdots \lambda(m_k) \]
which is equal to $0$ unless $k = 1$, in which case we have
\[ \lambda(r d[m_1]) = r \cdot m_1 \in M. \]
An arbitrary element $v \in \Sym_R(M)$ is a finite sum of elements of the form $r \cdot m_1 \dots m_k$, for varying $k$. It follows that for any $v \in \Sym_R(M)$, we have
\begin{equation} \label{eq:d} \lambda(d[v]) \in M. \end{equation}

Now consider an $R$-algebra homomorphism $\phi: \Sym_R(M) \to \Sym_R(M')$ for $R$-modules $M,M'$. We claim that $\phi$ commutes with the vertical lifts if and only if $\phi(M) \subseteq M'$.

First suppose $\phi(M) \subseteq M'$. We prove that $S(\phi)\lambda' = \lambda\phi$ by checking this equation on each of the algebra generators of $S(\Sym_R(M))$:
\[ \begin{split} & \lambda'(S(\phi))(m) = \lambda'(\phi(m)) = 0, \quad \phi(\lambda(m)) = \phi(0) = 0, \\
	& \lambda'(S(\phi)(d[r])) = \lambda'(d[\phi(r)]) = \lambda'(d[r]) = 0, \quad \phi(\lambda(d[r])) = \phi(0) = 0, \\
	& \lambda'(S(\phi)(d[m])) = \lambda'(d[\phi(m)]) = \phi(m), \quad \phi(\lambda(d[m])) = \phi(m). \end{split} \]
Since both sides are equal on each algebra generator, we deduce that $\phi$ commutes with the vertical lifts, so $\phi$ is a linear map of differential bundles.

Conversely, suppose $S(\phi)\lambda' = \lambda\phi$ and take an arbitrary $m \in M$. Then we have
\[ \phi(m) = \phi(\lambda(d[m])) = \lambda'(S(\phi)(d[m])) = \lambda'(d[\phi(m)]). \]
By (\ref{eq:d}), it follows that $\phi(m) \in M'$. Note that since $\phi(rm) = \phi(r)\phi(m) = r\phi(m)$, we also deduce that the restriction of $\phi$ to $M$ is an $R$-module homomorphism.

We have therefore identified the linear maps of differential bundles over $R$ of the form
\[ \phi: \Sym_R(M) \to \Sym_R(M') \]
with those $R$-algebra homomorphisms induced by an $R$-module homomorphism $\phi: M \to M'$. It follows that there is an equivalence of categories
\[ \DBun_R(\CRing^{op}) \simeq \Mod_R^{op} \]
recovering the Cruttwell-Lemay description in~\cite[4.17]{cruttwell/lemay:2023}.
\end{example}

\begin{example}[Schemes] \label{ex:schemes}
Identifying $\CRing^{op}$ with the category of affine schemes, there is an extension of the tangent structure considered in Example~\ref{ex:cring-op} to the category $\Sch$ of all schemes (and scheme morphisms). That extension is given by applying the functor $S$ of Example~\ref{ex:cring-op} affine locally. Cruttwell and Lemay~\cite[4.28]{cruttwell/lemay:2023} use the result for affine schemes to show that differential bundles in $\Sch$ over a scheme $X$ correspond (contravariantly) to quasi-coherent sheaves of $\mathcal{O}_X$-modules. In this case, there does not seem to be any benefit to approaching the classification of differential bundles directly using Theorem~\ref{thm:main}.
\end{example}

\begin{example}[Algebras over an operad] \label{ex:operad}
Let $R$ be a commutative ring with identity, and let $P$ be an operad in the symmetric monoidal category of $R$-modules. Ikonicoff, Lanfranchi, and Lemay~\cite{ikonicoff/lanfranchi/lemay:2023} have constructed dual tangent structures on the category $\Alg_P$ of $P$-algebras (in $R$-modules) and its opposite $\Alg_P^{op}$. Those structures specialize to the tangent structures on Examples~\ref{ex:cring} and~\ref{ex:cring-op} in the case that $R = \Z$ and $P$ is the commutative operad. In further work, Lanfranchi~\cite[4.8]{lanfranchi:2023} has determined the differential bundles in the tangent category $\Alg_P^{op}$, showing that differential bundles over a $P$-algebra $A$ correspond, up to isomorphism, to $A$-modules in the operadic sense. As noted, this generalizes the classification in Example~\ref{ex:cring-op}. 

The methods of Example~\ref{ex:cring-op} can presumably be modified to treat the case of the tangent structure on $\Alg_P^{op}$, replacing the free commutative (symmetric) algebra construction with a free $A$-algebra construction for a $P$-algebra $A$~\cite[4.4]{ikonicoff/lanfranchi/lemay:2023}, and replacing the module of K\"{a}hler differentials $\Omega_A$ with the corresponding module for a $P$-algebra $A$; see~\cite[12.3.8]{loday/vallette:2012}. We expect to be able to recover Lanfranchi's result by this process.

We are not aware of an explicit description of the differential bundles in the tangent structure on $\Alg_P$, though this case seems much more straightforward than that treated in~\cite{lanfranchi:2023}, and the correct answer was conjectured by Ikonicoff, Lanfranchi, and Lemay in~\cite[5(ii)]{ikonicoff/lanfranchi/lemay:2023}. By modifying the calculations of Example~\ref{ex:cring} as needed, we show that there is an equivalence of categories
\[ \DBun_A(\Alg_P) \simeq \Mod^P_A \]
between differential bundles over $A$, and the category of $A$-modules, in the operadic sense, for a $P$-algebra $A$. 

Recall from~\cite{ikonicoff/lanfranchi/lemay:2023} that the tangent bundle functor $T: \Alg_P \to \Alg_P$ is given by
\[ T(A) := A \ltimes A. \]
The underlying $R$-module of $A \ltimes A$ is $A \times A$, and the $P$-algebra structure map
\[ \phi^n_{T(A)}: P(n) \otimes T(A)^{\otimes n} \to T(A) \]
is given by
\begin{equation} \label{eq:mu} (\mu, (a_1,b_1), \dots, (a_n,b_n)) \mapsto \left( \phi^n_A(\mu, a_1, \dots, a_n), \sum_{i = 1}^{n} \phi^n_A(\mu, a_1, \dots, a_{i-1},  b_i, a_{i+1}, \dots, a_n \right) \end{equation}
for $\mu \in P(n)$, $a_1,b_1,\dots,a_n,b_n \in A$, where $\phi^n_A$ denotes the corresponding structure map for $A$.

Now take a morphism $q: E \to A$ in $\Alg_P$ with a section $z: A \to E$. Then the corresponding differential bundle has total space
\[ A \times_{T(A)} T(E) \times_{E} A. \]
This is the $P$-algebra with elements
\[ A \ltimes \ker(q) := \{(z(a),e) \in E \times E \; | \; q(e) = 0 \} \]
and with $P$-algebra structure inherited from that on $E \ltimes E$ by the formula in (\ref{eq:mu}). The kernel $M := \ker(q)$ is an $A$-module in the sense that there are compatible maps of the form
\[ \psi^n_{M,i}: P(n) \otimes A^{i-1} \otimes M \otimes A^{n-i} \to M \]
for $i = 1,\dots,n$, given in this case by restricting $\phi^n_E$ to the subspaces $A \isom z(A)$ and $M = \ker(q)$. Those maps $\psi^n_{M,i}$ make $M$ into an $A$-module.

Conversely, given an $A$-module $M$ with structure maps $\psi^n_{M,i}$, let $E := A \ltimes M$, the $P$-algebra with underlying $R$-module $A \times M$ and $P$-algebra structure maps $\phi^n_E: P(n) \times E^n \to E$ given by
\begin{equation} \label{eq:psi} (\mu, (a_1,m_1), \dots, (a_n,m_n)) \mapsto \left( \phi^n_A(\mu, a_1, \dots, a_n), \sum_{i = 1}^{n} \psi^n_{M,i}(\mu, a_1, \dots,  m_i, \dots, a_n \right). \end{equation}
Let $q:E \to A$ be the projection $q(a,e) = a$, and $z:A \to E$ the map given by $z(a) = (a,0)$. Then $\ker(q)$ is isomorphic to $M$ as a $P$-module, and so there is a differential bundle
\[ q: A \ltimes M \to A. \]
This confirms that every differential bundle is of that form for some $A$-module $M$.

The vertical lift for this differential bundle $q$ is given by
\[ \lambda: A \ltimes M \to (A \ltimes M) \ltimes (A \ltimes M); \quad \lambda(a,m) = ((a,0),(0,m)). \]
It follows, by the same argument as in Example~\ref{ex:cring}, that a map of $P$-algebras over $A$ of the form
\[ \phi: A \ltimes M \to A \ltimes M' \]
commutes with the vertical lifts if and only if $\phi$ is restricts to a map of $A$-modules $\phi: M \to M'$.  Thus, we obtain the desired equivalence of categories
\[ \DBun_A(\Alg_P) \simeq \Mod^P_A. \]
\end{example}

\providecommand{\bysame}{\leavevmode\hbox to3em{\hrulefill}\thinspace}
\providecommand{\MR}{\relax\ifhmode\unskip\space\fi MR }
\providecommand{\MRhref}[2]{%
  \href{http://www.ams.org/mathscinet-getitem?mr=#1}{#2}
}
\providecommand{\href}[2]{#2}


\begin{thebibliography}{BBC23}

\bibitem[BBC23]{bauer/burke/ching:2023}
Kristine Bauer, Matthew Burke, and Michael Ching, \emph{Tangent
  $\infty$-categories and {G}oodwillie calculus},
  \href{www.arxiv.org/abs/2101.07819}{arxiv:2101.07819}.

\bibitem[BCS09]{blute/cockett/seely:2009}
R.~F. Blute, J.~R.~B. Cockett, and R.~A.~G. Seely, \emph{Cartesian differential
  categories}, Theory Appl. Categ. \textbf{22} (2009), 622--672.

\bibitem[CC14]{cockett/cruttwell:2014}
J.~R.~B. Cockett and G.~S.~H. Cruttwell, \emph{Differential structure, tangent
  structure, and {SDG}}, Appl. Categ. Structures \textbf{22} (2014), no.~2,
  331--417.

\bibitem[CC18]{cockett/cruttwell:2018}
\bysame, \emph{Differential bundles and fibrations for tangent categories},
  Cah. Topol. G\'eom. Diff\'er. Cat\'eg. \textbf{59} (2018), no.~1, 10--92.

\bibitem[CL23]{cruttwell/lemay:2023}
G.~S.~H. Cruttwell and Jean-Simon~Pacaud Lemay, \emph{Differential bundles in
  commutative algebra and algebraic geometry}, Theory Appl. Categ. \textbf{39}
  (2023), 1077--1120.

\bibitem[ILL23]{ikonicoff/lanfranchi/lemay:2023}
Sacha Ikonicoff, Marcello Lanfranchi, and Jean-Simon~Pacaud Lemay, \emph{The
  {R}osický tangent categories of algebras over an operad},
  \href{www.arxiv.org/abs/2303.05434}{arxiv:2303.05434}.

\bibitem[Lan23]{lanfranchi:2023}
Marcello Lanfranchi, \emph{The differential bundles of the geometric tangent
  category of an operad},
  \href{www.arxiv.org/abs/2310.18174}{arxiv:2310.18174}.

\bibitem[Lav96]{lavendhomme:1996}
Ren{\'e} Lavendhomme, \emph{Basic concepts of synthetic differential geometry},
  Kluwer Texts Math. Sci., vol.~13, Dordrecht: Kluwer Academic Publishers,
  1996.

\bibitem[Lee13]{lee:2013}
John~M. Lee, \emph{Introduction to smooth manifolds}, 2nd revised ed ed., Grad.
  Texts Math., vol. 218, New York, NY: Springer, 2013.

\bibitem[LV12]{loday/vallette:2012}
Jean-Louis Loday and Bruno Vallette, \emph{Algebraic operads}, Grundlehren
  Math. Wiss., vol. 346, Berlin: Springer, 2012.

\bibitem[LW18]{lucyshyn-wright:2018}
Rory B.~B. Lucyshyn-Wright, \emph{On the geometric notion of connection and its
  expression in tangent categories}, Theory Appl. Categ. \textbf{33} (2018),
  832--866.

\bibitem[Mac21]{macadam:2021}
Benjamin MacAdam, \emph{Vector bundles and differential bundles in the category
  of smooth manifolds}, Appl. Categ. Structures \textbf{29} (2021), no.~2,
  285--310.

\bibitem[MS74]{milnor/stasheff:1974}
John~W. Milnor and James~D. Stasheff, \emph{Characteristic classes}, Ann. Math.
  Stud., vol.~76, Princeton University Press, Princeton, NJ, 1974.

\bibitem[Ros84]{rosicky:1984}
J.~Rosick\'{y}, \emph{Abstract tangent functors}, Diagrammes \textbf{12}
  (1984), JR1--JR11.

\end{thebibliography}

\end{document}